\documentclass[a4paper,12pt]{article}
\usepackage[active]{srcltx}
\usepackage{amsfonts}
\usepackage{amsmath}
\usepackage{amsbsy}
\usepackage{amsxtra}
\usepackage{latexsym}
\usepackage{amssymb}
\usepackage{url}
\usepackage{pstricks,pst-node}
\usepackage{cite}
\usepackage{enumerate}
\textwidth 16 cm
\textheight 22 cm

\makeatletter
\g@addto@macro\thesection.
\makeatother

\newtheorem{theorem}{Theorem}
\newtheorem{lemma}{Lemma}
\newtheorem{corollary}{Corollary}

\newtheorem{remark}{Remark}

\newtheorem{example}{Example}

\DeclareMathOperator{\cone}{cone}

\DeclareMathOperator{\spa}{span}

\DeclareMathOperator{\dme}{dim}

\newcommand{\lng}{\left\langle}
\newcommand{\rng}{\right\rangle}
\newcommand{\lf}{\left}
\newcommand{\rg}{\right}

\newcommand{\sa}{\sqcap}
\newcommand{\su}{\sqcup}
\newcommand{\R}{\mathbb R}
\newcommand{\smp}{\mathbb S^m_+}
\newcommand{\sm}{\mathbb S^m}

\newcommand{\n}{\mathbb N}
\newcommand{\mc}{\mathcal}

\newcommand{\ds}{\displaystyle}

\newcommand{\m}{\mathbf}

\newenvironment{proof}{{\noindent\bf Proof.}}{\hfill$\Box$\\}

\begin{document}

\title{Lattice-like subsets of Euclidean Jordan algebras
\thanks{{\it 1991 A M S Subject Classification:} Primary 90C33, Secondary 15A48; {\it Key words and phrases:} positive semidefinite cone, extended lattice operations, isotone projection onto a closed convex set, invariant sets with respect to extended lattice operations, variational inequalities.}}
%\author{\large A. B. N\'emeth, S. Z. N\'emeth}
\author{A. B. N\'emeth\\Faculty of Mathematics and Computer Science\\Babe\c s Bolyai University, Str. Kog\u alniceanu nr. 1-3\\RO-400084 Cluj-Napoca, Romania\\email: nemab@math.ubbcluj.ro 
\and% 
S. Z. N\'emeth\\School of Mathematics, The University of Birmingham\\The Watson Building, Edgbaston\\Birmingham B15 2TT, United Kingdom\\email: nemeths@for.mat.bham.ac.uk}
\date{}
\maketitle

\begin{abstract}
While studying some properties of linear operators in a Euclidean Jordan algebra,
Gowda, Sznajder and Tao have introduced generalized lattice operations based on the projection 
onto the cone of squares.
In two recent papers of the authors of the present paper it has been shown that these
lattice-like operators and their generalizations are important tools in establishing the isotonicity
of the metric projection onto some closed convex sets. The results of this kind are
motivated by metods for proving the existence of solutions of variational inequalities and methods 
for finding these solutions in a recursive way.
It turns out, that
the closed convex sets admitting isotone projections are exactly the sets
which are invariant with respect to these lattice-like operations, 
called lattice-like sets. 
In this paper it is shown that the Jordan subalgebras are lattice-like 
sets, but the converse in general is not true. In the case of simple Euclidean
Jordan algebras of rank at least three the lattice-like property
is rather restrictive, e.g., there are no lattice-like
proper closed convex sets with interior points.
\end{abstract}
 
\section{Introduction}

By using and generalizing the extended lattice operations due to Gowda, Sznajder and Tao 
\cite{GowdaSznajderTao2004}, in \cite{NemethNemeth2012a} and \cite{NemethNemeth2012b}
it has been shown that the projection onto a closed convex set is
isotone with respect to the order defined by a cone if and only if the set is invariant with 
respect to the extended lattice operations defined by the cone. We shall call such a set simply
invariant with respect to the cone, or if there is no ambiguity, lattice-like, or shortly l-l. We also showed that 
the a closed convex set with interior points is l-l if and only if all of its tangent hyperplanes are l-l. 
These results were motivated by iterative methods for variational
inequalities similar to the ones for complementarity problems in 
\cite{IsacNemeth1990b,IsacNemeth1990c,Nemeth2009a,AbbasNemeth2011}. More specifically, a 
variational inequality defined by a closed convex set $C$ and
a function $f$ can be equivalently written as the fixed point problem $\m x=P_C(\m x-f(\m x))$, where 
$P_C$ is the projection onto the closed convex set $C$. If the Picard iteration 
$\m x_{k+1}=P_C(\m x_k-f(\m x_k))$ is convergent and $f$ continuous, then the limit of $\m x_k$ is a 
solution of the variational inequality defined by $f$ and $C$. Therefore, it is important to 
give conditions under which the Picard iteration is convergent. This idea has been exploited 
in several papers, such as \cite{Auslender1976,Bertsekas1989,Iusem1997,Khobotov1987,Korpelevich1976,Marcotte1991,Nagurney1993,Sibony1970,Solodov1999,Solodov1996,Sun1996}. However, none of these
papers used the monotonicity of the sequence $\m x_k$. If one can show that 
$\m x_k$ is 
monotone increasing (decreasing) and bounded from above (below) with respect to an order 
defined by a regular cone (that is, a cone for which all such sequences are convergent), then 
it is convergent and its limit is a solution of the variational inequality defined by $f$ and 
$C$. In \cite{IsacNemeth1990b,IsacNemeth1990c,Nemeth2009a,AbbasNemeth2011} the convergence of the 
sequence $\m x_k$ was proved by using its monotonicity. Although they use 
non-iterative methods, we also mention the paper of H. Nishimura and E. A. Ok 
\cite{NishimuraOk2012}, where the isotonicity of the projection onto a closed convex set is used
for studying the solvability of variational inequalities and related equilibrium problems. 
To further accentuate the importance of ordered vector structures let us also mention that 
recently they are getting more and more ground in studying various fixed point and related 
equilibrium problems (see the book \cite{CarlHeikilla2011} of S. Carl and S Heikkil\"a and the 
references therein). The case of a self-dual 
cone is of special importance because of the elegant examples for invariant sets with respect 
to the nonnegative orthant and the Lorentz cone \cite{NemethNemeth2012a}. 
Moreover, properties of self-dual cones are becoming increasingly important because of conic 
optimization and applications of the analysis on symmetric cones. Especially important 
self-dual cones in applications are the nonnegative orthant, the Lorentz cone and the positive 
semidefinite cone, however the class of self-dual cones is much larger \cite{BarkerForan1976}. 
The results of \cite{NemethNemeth2012a} and \cite{NemethNemeth2012b} extend the results
of \cite{Isac1996} and \cite{NishimuraOk2012}. G. Isac showed in \cite{Isac1996} that the 
projection onto a closed convex sublattice of the Euclidean space ordered by the nonnegative 
orthant is isotone. H. Nishimura and E. A. Ok proved an extension of this  
result and its converse to Hilbert spaces in \cite{NishimuraOk2012}. The study of invariant sets with respect to
the nonnegative orthant goes back to the results of D. M. Topkis \cite{Topkis1976} and A. F.
Veinott Jr. \cite{Veinott1981}, but it wasn't until quite recently when all such invariant 
sets have been determined by M. Queyranne and F. Tardella \cite{QueyranneTardella2006}. The
same results have been obtained in \cite{NemethNemeth2012a} in a more geometric way. Although 
\cite{NemethNemeth2012a} also determined the invariant sets with respect to the Lorentz cone, 
it left open the question of finding the invariant sets with respect to the cone $\smp$ of 
$n\times n$ positive semidefinite matrices, called the positive semidefinite cone.

As a particular case we show that if 
$n\geq 3$, then there is no proper closed convex l-l set with nonempty interior  in 
the space $(\sm,\smp)$ (the space $\sm$ of $n\times n$ symmetric matrices ordered by the
cone $\smp$ of symmetric positive semidefinite matrices). For this it is 
enough to show that there are no invariant hyperplanes because the closed convex invariant sets 
with nonempty interior are the ones which have all tangent hyperplanes invariant. 
%If $n=2$, 
%then we determine all closed convex invariant sets with nonempty interior.
% Ezt ki kell hagyni mert nincs benne az anyagban. Nem lenne celszeru beletenni, mert ebben az esetben a rang 2 es
%az esetet fedi a megfelelo fejezet. (Magyarazni kellene, hogy valojaban $(\s2,s2p)$ izomorf $(\nc L_3,L_3)$-al.)

All these problems can be handled in the unifying context of the Euclidean Jordan algebras.
This way we can augment this field to an approach, where the order induced
by the cone of squares (the basic notion of the Jordan algebra) becomes emphasized.

To shorten our exposition, we assume the knowledge of basic facts and results on
Euclidean Jordan algebras. We strive to be in accordance with the terminology
in \cite{FarautKoranyi1994}. A concise introduction of the used
basic notions and facts in the field can be found in \cite{GowdaSznajderTao2004}.

\section{Preliminaries}

Denote by $\R^m$ the $m$-dimensional Euclidean space endowed with the scalar 
product $\lng\cdot,\cdot\rng:\R^m\times\R^m\to\R,$ and the Euclidean norm $\|\cdot\|$ and topology 
this scalar product defines.

Throughout this note we shall use some standard terms and results from convex geometry 
(see e.g. \cite{Rockafellar1970} and \cite{Zarantonello1971}). 

Let $K$ be a \emph{convex cone} in $\R^m$, i.e., a nonempty set with
(i) $K+K\subset K$ and (ii) $tK\subset K,\;\forall \;t\in \R_+ =[0,+\infty)$.
The convex cone $K$ is called \emph{pointed}, if $K\cap(-K)=\{\m 0\}.$

The convex cone $K$ is {\it generating} if $K-K=\R^m$.
 
For any $\m x,\m y\in \R^m$, by the equivalence $\m x\leq_K\m y\Leftrightarrow\m  y-\m x\in K$, the 
convex cone $K$ 
induces an {\it order relation} $\leq_K$ in $\R^m$, that is, a binary relation, which is 
reflexive and transitive. This order relation is {\it translation invariant} 
in the sense that $\m x\leq_K\m  y$ implies $\m x+\m z\leq_K\m  y+\m z$ for all $\m z\in \R^m$, and 
{\it scale invariant} in the sense that $\m x\leq_K\m y$ implies $t\m x\leq_K t\m y$ for any $t\in \R_+$.
If $\leq$ is a translation invariant and scale invariant order relation on $\R^m$, then 
$\leq=\leq_K$, where $K=\{\m x\in\R^m:\m 0\leq\m  x\}$ is a convex cone. If $K$ is pointed, then $\leq_K$ is 
\emph{antisymmetric} too, that is $\m x\leq_K\m  y$ and $\m y\leq_K\m  x$ imply that $\m x=\m y.$
The elements $\m x$ and $\m y$ are called \emph{comparable} if $\m x\leq_K\m  y$
or $\m y\leq_K\m  x.$

We say that $\leq_K$ is a \emph{latticial order} if for each pair
of elements $\m x,\m y\in \R^m$ there exist the least upper bound
$\sup\{\m x,\m y\}$ and the greatest lower bound $\inf\{\m x,\m y\}$ of
the set $\{\m x,\m y\}$ with respect to the order relation $\leq_K$.
In this case $K$ is said a \emph{latticial or simplicial cone},
and $\R^m$ equipped with a latticial order is called an
\emph{Euclidean vector lattice}.

The \emph{dual} of the convex cone $K$ is the set
$$K^*:=\{\m y\in \R^m:\;\lng\m  x,\m y\rng \geq 0,\;\forall \;\m x\in K\},$$
with $\lng\cdot,\cdot\rng $ the standard scalar product in $\R^m$.

The convex cone $K$ is called \emph{self-dual}, if $K=K^*.$ If $K$
is self-dual, then it is a generating pointed closed convex cone.

In all that follows we shall suppose that $\R^m$ is endowed with a
Cartesian reference system with a basis $\m e_1,\dots,\m e_m$. If $\m x\in \R^m$, then
$$\m x=x_1 \m e_1+...+x_m \m e_m$$ can be characterized by the ordered $m$-tuple of real numbers 
$x_1,...,x_m$, called
\emph{the coordinates of} $x$ with respect to the given reference system, and we shall write
$\m x=(x_1,...,x_m).$ With this notation we have $\m e_i=(0,...,0,1,0,...,0),$
with $1$ in the $i$-th position and $0$ elsewhere. Let
$\m x,\m y\in \R^m$, $\m x=(x_1,...,x_m)$, $\m y=(y_1,...,y_m)$, where $x_i$, $y_i$ are the coordinates of
$\m x$ and $\m y$, respectively with respect to the reference system. Then, the scalar product of $\m x$
and $\m y$ is the sum
$\lng\m  x,\m y\rng =\sum_{i=1}^m x_iy_i.$
It is easy to see that $\m e_1,\dots,\m e_m$ is an orthonormal system of vectors with respect to 
this scalar product, in the sense
that $\lng \m e_i,\m e_j\rng =\delta_i^j$, where $\delta_i^j$ is the Kronecker symbol.

The set
\[\R^m_+=\{\m x=(x_1,...,x_m)\in \R^m:\; x_i\geq 0,\;i=1,...,m\}\]
is called the \emph{nonnegative orthant} of the above introduced Cartesian
reference system. A direct verification shows that $\R^m_+$ is a
self-dual cone.

The set
\begin{equation}\label{lorentzcone}
	\mathcal L^{m+1}_+=\{(\m x,x_{m+1})\in\R^{m}\otimes\R=\R^{m+1}:\;\|\m x\|\leq x_{m+1}\},
\end{equation} 
is a self-dual cone called the 
\emph{$m+1$-dimensional second order cone}, or the 
\emph{$m+1$-dimensional Lorentz cone}, or the \emph{$m+1$-dimensional ice-cream cone} 
\cite{GowdaSznajderTao2004}. 

The nonnegative orthant $\R^m_+$ and the Lorentz cone $L$ defined above are the
most important and commonly used self-dual cones in the Euclidean space.
But the family of self-dual cones is rather rich
\cite{BarkerForan1976}.

\section{Generalized lattice operations}\label{Generalized lattice operations}

A  \emph{hyperplane through the origin}, is a set of form
\begin{equation}\label{hypersubspace}
H(\m 0,\m a)=\{\m x\in \R^m:\;\lng\m  a,\m x\rng =0\},\;\;\m a\not=\m 0.
\end{equation}
For simplicity the hyperplanes through $\m 0$ will also be denoted by $H$.
The nonzero vector $\m a$ in the above formula is called \emph{the normal}
of the hyperplane. 
%For the sake of commodity \emph{in all what follows
%we shall assume that} $\|\m u\|=1$, that is, that the normal $\m u$ of the hyperplane
%$H$ is a \emph{unit vector}.

A \emph{hyperplane through $\m u\in\R^m$ with the normal $\m a$} is the set of the form
\begin{equation}\label{hyperplane}
H(\m u,\m a)=\{\m x\in \R^m:\;\lng\m  a,\m x\rng =\lng\m  a,\m u\rng, \;\m a\not=\m  0\}.
\end{equation}

A hyperplane $H(\m u,\m a)$ determines two \emph{closed halfspaces} $H_-(\m u,\m a)$ and
$H_+(\m u,\m a)$  of $\R^m$, defined by

\[H_-(\m u,\m a)=\{\m x\in \R^m:\; \lng\m  a,\m x\rng \leq \lng\m  a,\m u\rng\},\]
and
\[H_+(\m u,\m a)=\{\m x\in \R^m:\; \lng\m  a,\m x\rng \geq \lng\m  a,\m u\rng\}.\]

Taking a Cartesian reference system in $\R^m$ and using the above introduced
notations,
the 
\emph{coordinate-wise order}  $\leq$ in $\R^m$ is defined by
\[\m x=(x_1,...,x_m)\leq\m  y=(y_1,...,y_m)\;\Leftrightarrow\;x_i\leq y_i,\;i=1,...,m.\]
By using the notion of the order relation induced by a cone, defined in the preceding
section, we see that $\leq =\leq_{\R^m_+}$.

With the above representation of $\m x$ and $\m y$, we define
$$\m x\wedge\m  y=(\min \{x_1,y_1\},...,\min \{x_m,y_m\}),\;\;\textrm{and}\;\;\m x\vee\m  y=(\max \{x_1,y_1\},...,\max \{x_m,y_m\}).$$

Then, $\m x\wedge\m  y$ is  the greatest lower bound and $\m x\vee\m  y$ is the least upper bound of 
the set $\{\m x,\m y\}$ with respect to the coordinate-wise order. Thus, $\leq$ is a lattice order in $\R^m.$
The operations $\wedge$ and $\vee$ are called \emph{lattice operations}.

A subset $M\subset \R^m$ is called a \emph{sublattice of
the coordinate-wise ordered Euclidean space} $\R^m$, if from
$\m x,\m y\in M$ it follows that $\m x\wedge\m  y,\;\m x\vee\m  y\in M.$ 

Denote by $P_D$ 
the projection mapping onto a nonempty closed convex set $D\subset \R^m,$ 
that is the mapping which associates
to $\m x\in \R^m$ the unique nearest point of $x$ in $D$ (\cite{Zarantonello1971}):

\[ P_D\m x\in D,\;\; \textrm{and}\;\; \|\m x-P_D\m x\|= \inf \{\|\m x-\m y\|: \;\m y\in D\}. \]

The nearest point $P_D\m x$ can be characterized by

\begin{equation}\label{charac}
P_D\m x\in D,\;\;\textrm{and}\;\;\lng P_D\m x -\m x,P_D\m x-\m y\rng \leq 0 ,\;\forall\m  y\in D.
\end{equation}

From the definition of the projection and the characterization (\ref{charac}) there follow immediately 
the relations: 

\begin{equation}\label{en}
	P_D(-\m x)=-P_{-D}\m x ,
\end{equation}

\begin{equation}\label{et}
	P_{\m x+D}\m y=\m x+P_D(\m y-\m x) 
\end{equation}
for any $\m x,\m y\in\R^m$,

%\begin{equation}\label{sun}
%P_D(t\m x+(1-t)P_D\m x)=P_D\m x,\;\;\forall \;t\in[0,1].
%\end{equation}

For a closed convex cone $K$ we define the following operations in $\R^m$: 
\[\m x\sa_K\m  y=P_{\m x-K}\m y,\;\,\textrm{and}\;\;\m  x\su_K\m  y=P_{\m x+K}\m y\]
(see \cite{GowdaSznajderTao2004}). Assume the operations $\su_K$ and $\sa_K$ have precedence over the addition of vectors and 
multiplication of vectors by scalars.
 
A direct checking yields that if $K=\R^m_+$, then $\sa_K =\wedge$, and $\su_K =\vee$.
That is $\sa_K$ and $\su_K$ are some \emph{generalized lattice operations}.
Moreover: $\sa_K$ and $\su_K$ \emph{are lattice operations if and only if
the self-dual cone used in their definitions is a nonnegative orthant of
some Cartesian reference system.}
This suggest to call the operations $\sa_K$ and $\su_K$ \emph{lattice-like operations},
while a subset $M\subset \R^m$ which is \emph{invariant with respect to
$\sa_K$ and $\su_K$}  (i.e. if for any $\m x,\m y\in M$ we have $\m x\sa_K\m  y,\;\m x\su_K\m  y\in M$),
a \emph{lattice-like} or simply an \emph{l-l} subset of $(\R^m,K)$.

The following assertions are direct consequences of the definition
of lattice-like operations:

\begin{lemma}\label{l0}
	The following relations hold for any $\m x,\m y\in (\R^m,K)$: 
	$$\m x\sa_K\m  y=\m x-P_K(\m x-\m y),$$
	$$\m x\su_K\m  y=\m x+P_K(\m y-\m x).$$ 
\end{lemma}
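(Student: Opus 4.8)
The plan is to obtain both identities as immediate consequences of the two elementary projection rules recorded in~(\ref{en}) and~(\ref{et}): the translation rule $P_{\m x+D}\m y=\m x+P_D(\m y-\m x)$ and the reflection rule $P_D(-\m x)=-P_{-D}\m x$. No further properties of the projection are needed, so the argument is purely formal manipulation of these two relations.

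First I would treat the join, which follows in one line. By definition $\m x\su_K\m y=P_{\m x+K}\m y$, and applying the translation rule~(\ref{et}) with $D=K$ gives at once
$$\m x\su_K\m y=P_{\m x+K}\m y=\m x+P_K(\m y-\m x),$$
which is the second claimed relation.

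Next I would treat the meet, which requires one extra step. By definition $\m x\sa_K\m y=P_{\m x-K}\m y$. Writing $\m x-K=\m x+(-K)$ and applying the translation rule~(\ref{et}) with $D=-K$ yields
$$\m x\sa_K\m y=P_{\m x+(-K)}\m y=\m x+P_{-K}(\m y-\m x).$$
It remains to rewrite $P_{-K}(\m y-\m x)$ in terms of $P_K$. Setting $\m z=\m x-\m y$, so that $\m y-\m x=-\m z$, and applying the reflection rule~(\ref{en}) with $D=-K$, I obtain $P_{-K}(-\m z)=-P_{-(-K)}\m z=-P_K\m z$, that is $P_{-K}(\m y-\m x)=-P_K(\m x-\m y)$. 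Substituting this back gives
$$\m x\sa_K\m y=\m x-P_K(\m x-\m y),$$
which is the first claimed relation.

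There is essentially no obstacle here; the only point demanding care is the bookkeeping of signs when invoking the reflection rule~(\ref{en}), since it must be applied simultaneously with the negated set $-K$ and the negated argument. Once those substitutions are made correctly, both identities fall out with no computation. Conceptually, each identity merely says that projecting $\m y$ onto the translate $\m x\pm K$ amounts to centering at $\m x$, projecting the residual $\m y-\m x$ onto $\pm K$, and translating back, with the meet absorbing the sign change through the reflection $K\mapsto -K$.
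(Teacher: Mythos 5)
Your proof is correct and is exactly the argument the paper intends: the paper states Lemma~\ref{l0} without proof as a ``direct consequence'' of the definitions, having recorded relations (\ref{en}) and (\ref{et}) immediately beforehand precisely for this purpose. Your derivation (translation rule (\ref{et}) for $\su_K$, and (\ref{et}) combined with the reflection rule (\ref{en}) applied to $-K$ for $\sa_K$) fills in those omitted details with the signs handled correctly.
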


If $K$ is a nonzero closed convex cone, then the closed convex set $C\subset \R^m$ is called a
\emph{$K$-isotone projection set} or simply \emph{$K$-isotone}
if $\m x\leq_K\m  y$ implies $P_C\m  x\leq_K\m  P_C\m  y$. In this case we use equivalently
the term \emph{$P_C$ is $K$-isotone}.

We shall refer next often to the following theorems:

\begin{theorem}\cite{NemethNemeth2012b}\label{ISOINV}
	Let $K\subset \R^m$ be a closed convex cone. Then, $C$ is a lattice-like set, if and only if $P_C$ is 
	$K$-isotone. 
	%If $C$ is invariant with respect to the operations $\su^*$ and $\sa$, 
	%then $C$ is a $L$-isotone projection set.
\end{theorem}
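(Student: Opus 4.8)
The plan is to prove both implications directly from the variational characterization (\ref{charac}) of the metric projection, after rewriting the lattice-like operations by means of Lemma \ref{l0}. The only fact I would borrow beyond (\ref{charac}) is the standard Moreau-type decomposition attached to $K$: for any $\m z$ the vector $\m z-P_K\m z$ lies in $-K^*$ and is orthogonal to $P_K\m z$. The whole argument then consists of choosing good test points for (\ref{charac}), splitting the relevant difference into its $K$-part and its $(-K^*)$-part, and letting the orthogonality kill the cross terms.

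For the implication that lattice-likeness implies $K$-isotonicity, I would take $\m x\leq_K\m y$ and set $\m p=P_C\m x$, $\m q=P_C\m y$, $\m w=P_K(\m q-\m p)\in K$, together with $\m v=(\m q-\m p)-\m w\in -K^*$, so that $\m v\perp\m w$. By Lemma \ref{l0} the two points $\m p+\m w=\m p\su_K\m q$ and $\m q-\m w=\m q\sa_K\m p$ lie in $C$ because $C$ is l-l. Feeding the point $\m q-\m w$ into the projection inequality for $\m p=P_C\m x$, and the point $\m p+\m w$ into the inequality for $\m q=P_C\m y$, I get $\lng\m x-\m p,\m v\rng\leq 0$ and $\lng\m q-\m y,\m v\rng\leq 0$; combined with $\|\m v\|^2=\lng\m v,\m q-\m p\rng$ these yield $\|\m v\|^2\leq\lng\m y-\m x,\m v\rng$. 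The decisive step is that $\m y-\m x\in K$ while $\m v\in -K^*$, so $\lng\m y-\m x,\m v\rng\leq 0$ by definition of the dual cone; hence $\m v=\m 0$, i.e. $\m q-\m p=\m w\in K$, which is exactly $P_C\m x\leq_K P_C\m y$.

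For the converse I would assume $P_C$ is $K$-isotone and take $\m x,\m y\in C$, showing that $\m s=\m x\su_K\m y=\m x+\m w\in C$, where again $\m w=P_K(\m y-\m x)$ and $\m v=(\m y-\m x)-\m w\in -K^*$ with $\m v\perp\m w$. Since $\m s\geq_K\m x$ and $P_C\m x=\m x$, isotonicity forces $\m r:=P_C\m s$ to satisfy $\m r-\m x\in K$. Testing the projection inequality for $\m r=P_C\m s$ against $\m y\in C$ and using $\m s-\m y=-\m v$ gives $\|\m r-\m s\|^2\leq\lng\m r-\m s,\m v\rng$; expanding $\m r-\m s=(\m r-\m x)-\m w$ and using $\m w\perp\m v$ reduces the right-hand side to $\lng\m r-\m x,\m v\rng$, which is $\leq 0$ because $\m r-\m x\in K$ and $\m v\in -K^*$. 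Thus $\m r=\m s$, so $\m s\in C$; the corresponding statement for $\sa_K$ follows by the mirror-image computation, starting from $\m s'=\m x\sa_K\m y\leq_K\m x$ and using that isotonicity now pushes $P_C\m s'$ below $\m x$.

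The main obstacle is precisely the generality of $K$: one is tempted to argue as if $\su_K$ produced a genuine supremum and as if both $\m w$ and $\m v$ sat inside $K$, but for a non-self-dual cone the orthogonal complement $\m v$ lands in $-K^*$, not in $K$. Getting the signs right therefore hinges on consistently pairing the $K$-component against $\m y-\m x$ (or against $\m r-\m x$), which lies in $K$, with the $(-K^*)$-component $\m v$, so that the dual-cone inequality, namely that $\lng\m k,\m d\rng\leq 0$ whenever $\m k\in K$ and $\m d\in -K^*$, can be invoked. The orthogonality $\m w\perp\m v$ coming from the Moreau decomposition is what makes the cross terms vanish and lets this single inequality close both directions.
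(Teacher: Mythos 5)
Your proof is correct. Note first that the paper itself gives no proof of Theorem \ref{ISOINV}: it is imported verbatim from \cite{NemethNemeth2012b}, so there is no internal argument to compare against; your proposal in effect supplies the missing proof, and it checks out in both directions. In the forward direction, the two test points $\m q-\m w=\m q\sa_K\m p$ and $\m p+\m w=\m p\su_K\m q$ are exactly the right instances of the l-l property, the two inequalities $\lng\m x-\m p,\m v\rng\le 0$ and $\lng\m q-\m y,\m v\rng\le 0$ follow correctly from (\ref{charac}), and the telescoping $\|\m v\|^2=\lng\m v,\m q-\m y\rng+\lng\m v,\m y-\m x\rng+\lng\m v,\m x-\m p\rng$ together with $\m y-\m x\in K$, $\m v\in-K^*$ forces $\m v=\m 0$. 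In the converse direction, the chain $\m x=P_C\m x\leq_K P_C\m s=\m r$, the expansion $\m r-\m s=(\m r-\m x)-\m w$, and the orthogonality $\m w\perp\m v$ correctly reduce everything to $\lng\m r-\m x,\m v\rng\le 0$, giving $P_C\m s=\m s$; the mirror computation for $\sa_K$ works as you describe. You are also right about the one genuinely delicate point: for a general closed convex cone the complementary piece $\m v=\m z-P_K\m z$ lives in the polar cone $-K^*$ rather than in $\pm K$, and your argument consistently pairs it only against vectors known to lie in $K$, which is precisely what makes the proof valid without any self-duality assumption (in contrast with Lemma \ref{lm} of the paper, which is the self-dual special case of Moreau's theorem).
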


\begin{theorem}\cite{NemethNemeth2012a}\label{FOOO}
The closed convex set $C$ with nonempty interior in $(\R^m,K)$ is lattice-like,
if and only if it is of form
\begin{equation*}
C=\bigcap_{i\in \n} H_-(\m u_i,\m a_i),
\end{equation*}
where each hyperplane $H(\m u_i,\m a_i)$ through $\m u_i$ with the normal $\m a_i$ is tangent to $C$ and is lattice-like.
\end{theorem}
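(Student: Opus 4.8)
The plan is to reduce Theorem~\ref{FOOO} to a single lemma about half-spaces together with two soft facts, using Theorem~\ref{ISOINV} to translate freely between ``$C$ is lattice-like'' and ``$P_C$ is $K$-isotone.'' The two soft facts are: (i) an intersection of lattice-like sets is again lattice-like, which is immediate from the definition, since for $\m x,\m y$ in the intersection the points $\m x\sa_K\m y$ and $\m x\su_K\m y$ must lie in every member of the family; and (ii) a closed convex set with nonempty interior equals the intersection of the closed half-spaces bounded by its supporting hyperplanes, and a countable subfamily suffices. The lemma I would isolate is: \emph{for a hyperplane $H(\m u,\m a)$ and its half-space $H_-(\m u,\m a)$, the half-space is lattice-like if and only if the hyperplane is lattice-like, and this depends only on the normal $\m a$.} Granting the lemma, the two directions fall out.

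First I would record the equivariance of the operations. From the relation (\ref{et}) and the positive homogeneity $P_K(t\m z)=tP_K(\m z)$ of the projection onto the cone, Lemma~\ref{l0} gives $(t\m x+\m c)\sa_K(t\m y+\m c)=t(\m x\sa_K\m y)+\m c$ for $t>0$, and likewise for $\su_K$. Hence every map $\m z\mapsto t\m z+\m c$ with $t>0$ carries lattice-like sets to lattice-like sets, so the lattice-like property of a hyperplane or half-space is invariant under translation and positive dilation; in particular it depends only on the normal direction $\m a$. Normalizing $\m u=\m 0$ and $\|\m a\|=1$, the orthogonal projection is $P_H\m x=\m x-\lng\m a,\m x\rng\m a$, and I would first check via Theorem~\ref{ISOINV} and the linearity of $P_H$ that $H$ is lattice-like exactly when $P_H(K)\subset K$. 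The nontrivial half of the lemma, ``$H$ lattice-like $\Rightarrow H_-$ lattice-like,'' is then a short convexity argument: writing $P_{H_-}\m x=\m x-(\lng\m a,\m x\rng)_+\m a$ and testing $K$-isotonicity on a pair $\m x\leq_K\m y$, the only cases needing work are those where exactly one of $\m x,\m y$ lies in $H_-$; in each such case the required vector has the form $\m w-t\m a$ with $\m w=\m y-\m x\in K$ and $t$ lying between $0$ and $\lng\m a,\m w\rng$, and since both endpoints $\m w$ and $P_H\m w=\m w-\lng\m a,\m w\rng\m a$ lie in $K$, convexity of $K$ places the whole segment, hence $\m w-t\m a$, in $K$. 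The converse implication follows by translating any ordered pair far along $\m a$, where $P_{H_-}$ coincides with $P_H$. With the lemma in hand, the direction ($\Leftarrow$) is immediate: each $H_-(\m u_i,\m a_i)$ is lattice-like, so by fact (i) the intersection $C$ is lattice-like.

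For the direction ($\Rightarrow$), I would write $C$ via fact (ii) as the intersection of its tangent half-spaces and show each tangent hyperplane is lattice-like. The engine is a blow-up: fix a boundary point $\m p$ at which the supporting hyperplane $H$ is unique, let $\m a$ be the outward normal, and apply the dilations $\phi_t(\m z)=\m p+t(\m z-\m p)$. Each $\phi_t(C)$ is lattice-like by the equivariance above, and as $t\to\infty$ the increasing sets $\phi_t(C)$ exhaust the tangent half-space $H_-(\m p,\m a)$ (their union is dense in it, precisely because $\m p$ is a smooth point). Passing the lattice-like property to this limit --- equivalently, passing the $K$-isotonicity of $P_{\phi_t(C)}$ to $P_{H_-}$ through continuity of metric projections under convergence of the underlying convex sets --- yields that $H_-(\m p,\m a)$, and hence $H$, is lattice-like. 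Finally, since smooth boundary points are dense and every supporting hyperplane is a limit of supporting hyperplanes at smooth points, and since the condition $P_H(K)\subset K$ is closed in the normal $\m a$, all tangent hyperplanes are lattice-like, completing the representation.

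The main obstacle is exactly this transfer in the forward direction: turning the global $K$-isotonicity of $P_C$ into the flat condition $P_H(K)\subset K$ on the tangent. The delicate points are (a) justifying that $\phi_t(C)\to H_-(\m p,\m a)$ and that $K$-isotonicity survives the limit, for which I would invoke continuity of the projection in the set argument together with closedness of $K$; and (b) handling non-smooth boundary points by the density-and-limit argument on the normals. By contrast, the half-space/hyperplane lemma and the ($\Leftarrow$) direction are routine once the convexity-of-$K$ observation is in place.
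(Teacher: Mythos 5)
Your backward direction, your half-space/hyperplane lemma (both the segment-convexity argument and the translation trick are sound), and your blow-up argument at a smooth boundary point are all correct. The genuine gap is in the final, globalizing step of the forward direction. The claim ``every supporting hyperplane is a limit of supporting hyperplanes at smooth points'' is false: for the square $[0,1]^2\subset\R^2$, the normals arising at smooth boundary points form the closed set $\{\pm\m e_1,\pm\m e_2\}$, so the supporting hyperplane at the corner $(1,1)$ with normal $(\m e_1+\m e_2)/\sqrt 2$ is not a limit of smooth-point supporting hyperplanes. Worse, the conclusion you draw from it, ``all tangent (i.e., supporting) hyperplanes of a lattice-like set are lattice-like,'' is itself false, so no argument can rescue it: take $K=C=\R^2_+$. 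Then $C$ is lattice-like (the operations $\sa_K,\su_K$ are the coordinatewise $\wedge,\vee$, under which $\R^2_+$ is invariant), and $H=\{\m x\in\R^2: x_1+x_2=0\}$ supports $C$ at $\m 0$; but $H$ is not lattice-like, since $(1,-1),(-1,1)\in H$ while $(1,-1)\sa_K(-1,1)=(-1,-1)\notin H$ (equivalently, condition (iii) of Lemma \ref{ti} fails for $\m x=(1,0)$, $\m y=(0,1)$, $\m a=(\m e_1+\m e_2)/\sqrt 2$).

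The gap matters because your fact (ii) hands you an uncontrolled countable family of supporting half-spaces, and your plan requires every member of that family to be lattice-like. The repair is to choose the family rather than control all of them: the theorem only asserts the existence of \emph{some} countable representation by lattice-like tangent half-spaces, and the right family is the one your dilation argument already handles, namely the half-spaces supporting $C$ at smooth boundary points. What you then need, in place of the false density claim, is the fact that a closed convex set with nonempty interior equals the intersection of its supporting half-spaces at smooth points. This is true but requires its own argument; for instance, taking the Minkowski gauge $g$ of $C$ with respect to an interior point, $g$ is a finite convex function, hence differentiable on a dense set, points of differentiability rescale (by positive homogeneity) to smooth boundary points, and for $\m q\notin C$ one obtains, from differentiability points $\m x_n\to\m q$ with $g(\m x_n)>1$, smooth-point half-spaces excluding $\m q$; a Lindel\"of argument then extracts a countable subfamily. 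With that substitution, your blow-up at smooth points plus your half-space lemma and Theorem \ref{ISOINV} complete the proof.
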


\section{Characterization of the lattice-like subspaces of $(\R^m,K)$}

Denote by $K$ a closed convex cone in $\R^m$ and by $(\R^m,K)$ the
resulting ordered vector space.

The notation $G\Subset H$ will mean
\emph{$H$ and $G$ are subspaces of $\R^m$ and $G$ is  a subspace of $H$}.
Let $H\Subset \R^m$ and $L\subset H$ a closed convex cone.
The notation $G\sqsubset_L H$ will mean \emph{$G$ is an l-l subspace of $(H,L)$}.

We gather some results from Theorem 1 \cite{NemethNemeth2012b} and Lemma 6 \cite{NemethNemeth2012a} and particularize them for subspaces:

\begin{corollary}\label{fooalt}

Let $H$ a subspace in $(\R^m,K)$. 
the following assertions are equivalent:
\begin{enumerate}
	\item $H\sqsubset_K \R^m$,
	\item $P_{K} H\subset H$,
	\item $P_{H} K \subset K$.
\end{enumerate}

\end{corollary}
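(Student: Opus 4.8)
The plan is to establish the two equivalences (1) $\Leftrightarrow$ (2) and (1) $\Leftrightarrow$ (3) separately, since each reduces cleanly to earlier material once the subspace structure of $H$ and the linearity of the orthogonal projection $P_H$ are exploited. Chaining them then delivers (2) $\Leftrightarrow$ (3) for free.

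For (1) $\Leftrightarrow$ (2), I would start from the definition: $H\sqsubset_K \R^m$ means $\m x\sa_K\m y\in H$ and $\m x\su_K\m y\in H$ for all $\m x,\m y\in H$. Rewriting these two memberships with Lemma \ref{l0} turns them into $\m x-P_K(\m x-\m y)\in H$ and $\m x+P_K(\m y-\m x)\in H$. Because $\m x\in H$ and $H$ is a subspace, each of these is equivalent to $P_K(\m x-\m y)\in H$, respectively $P_K(\m y-\m x)\in H$. As $\m x$ and $\m y$ range independently over $H$, both differences $\m x-\m y$ and $\m y-\m x$ sweep out all of $H$; hence the two conditions collapse to the single requirement $P_K\m z\in H$ for every $\m z\in H$, that is $P_K H\subset H$, which is (2).

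For (1) $\Leftrightarrow$ (3), I would invoke Theorem \ref{ISOINV} with $C=H$: the subspace $H$ is lattice-like if and only if $P_H$ is $K$-isotone. Since $H$ is a subspace, its metric projection $P_H$ is the orthogonal projection, which is linear, and for a linear map $K$-isotonicity unwinds directly. Indeed, $\m x\leq_K\m y$ means $\m y-\m x\in K$, while $P_H\m x\leq_K P_H\m y$ means $P_H(\m y-\m x)\in K$; setting $\m z=\m y-\m x$ and letting it run over $K$ (e.g. via $\m x=\m 0$, $\m y=\m z$) shows that $K$-isotonicity of $P_H$ is precisely the inclusion $P_H K\subset K$, which is (3).

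The definitional unwindings are routine; the one step carrying real content is the appeal to Theorem \ref{ISOINV}, which bridges the combinatorial lattice-like property and the monotonicity of the projection, together with the observation that the linearity of $P_H$ converts abstract $K$-isotonicity into the concrete inclusion $P_H K\subset K$. I do not expect a genuine obstacle beyond taking care that the subspace property, namely closure under negation and addition, is used correctly, so that both the $\sa_K$ and the $\su_K$ conditions genuinely reduce to the same inclusion $P_K H\subset H$.
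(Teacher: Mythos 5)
Your proof is correct and follows essentially the same route as the paper: the equivalence $(1)\Leftrightarrow(2)$ by unwinding the lattice-like property through Lemma \ref{l0} and the subspace structure of $H$, and the equivalence $(1)\Leftrightarrow(3)$ by applying Theorem \ref{ISOINV} with $C=H$ and using the linearity of the orthogonal projection $P_H$ to identify $K$-isotonicity with the inclusion $P_H K\subset K$. Your write-up is somewhat more explicit than the paper's (which compresses both equivalences into a few lines), but there is no substantive difference.
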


\begin{proof}
The corollary is in fact a reformulation of Theorem \ref{ISOINV} for
the case of $D=H$ a subspace. Indeed, condition 2 is nothing else as
the l-l property of of $H$ since if $\m x,\m y\in H$, then by
Lemma \ref{l0}, one has
$$\m x\sa_K\m y=\m x-P_K(\m x-\m y)\in H,$$
since $\m x,\,\m x-\m y,\,P_K(\m x-\m y)\in H.$

Similarly, $\m x\su_K\m y\in H.$

Condition 3 expresses, by the linearity  of $P_H$ its $K$-isotonicity.
\end{proof}

\begin{corollary}\label{fooaltkov}
	Let $G\Subset H$ and $H\sqsubset_K \R^m$. Then, $G\sqsubset_{K\cap H} H \Leftrightarrow G\sqsubset_K \R^m$.
\end{corollary}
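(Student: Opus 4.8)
The plan is to translate both l-l conditions into inclusion statements about metric projections by means of Corollary \ref{fooalt}, and then to reduce everything to a single observation: on the subspace $H$ the projection onto the cone $K$ and the projection onto the smaller cone $K\cap H$ coincide. First I would invoke Corollary \ref{fooalt} twice. Applied in the ambient space $\R^m$, it gives
$$G\sqsubset_K\R^m \Leftrightarrow P_K G\subset G.$$
Applied inside $H$, viewed as the ordered space $(H,K\cap H)$ with distinguished subspace $G$, it gives
$$G\sqsubset_{K\cap H} H \Leftrightarrow P_{K\cap H}G\subset G,$$
where the projection onto the closed convex cone $K\cap H\subset H$ may be computed in $H$ or in $\R^m$ interchangeably, since $K\cap H$ is contained in the subspace $H$ and orthogonal projection onto a set lying in a subspace, evaluated at a point of that subspace, does not see the ambient space.

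The heart of the argument, and the step I expect to be the main obstacle, is the claim that $P_{K\cap H}\m x=P_K\m x$ for every $\m x\in H$. Here the hypothesis $H\sqsubset_K\R^m$ enters decisively: by Corollary \ref{fooalt} it yields $P_K H\subset H$, so for $\m x\in H$ the point $P_K\m x$ lies both in $K$ and in $H$, hence in $K\cap H$. Because $K\cap H\subset K$, the distance from $\m x$ to $K\cap H$ is at least the distance from $\m x$ to $K$, the latter being $\|\m x-P_K\m x\|$; but $P_K\m x$ is itself a point of $K\cap H$, so this distance is also an upper bound for the distance from $\m x$ to $K\cap H$. The two bounds coincide, so $P_K\m x$ realizes the nearest distance to $K\cap H$, and by uniqueness of the nearest point $P_{K\cap H}\m x=P_K\m x$.

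Finally I would combine these facts. Since every $\m x\in G$ lies in $H$, the claim gives $P_{K\cap H}\m x=P_K\m x$ for all $\m x\in G$, whence $P_{K\cap H}G\subset G$ if and only if $P_K G\subset G$. Reading this equivalence of inclusions through the two reformulations above yields $G\sqsubset_{K\cap H}H\Leftrightarrow G\sqsubset_K\R^m$, which is exactly the asserted equivalence. The only point requiring care is the interchangeability of the projection onto $K\cap H$ computed in $H$ versus in $\R^m$ noted in the first paragraph; once that is granted, the whole argument hinges on the single coincidence-of-projections lemma, and the hypothesis $H\sqsubset_K\R^m$ is used precisely to guarantee that $P_K$ maps $H$ into $H$.
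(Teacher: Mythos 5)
Your proof is correct, and it follows a genuinely different route from the paper's. Everything in your argument funnels through the coincidence lemma $P_{K\cap H}\m x=P_K\m x$ for all $\m x\in H$, which holds precisely because the hypothesis $H\sqsubset_K\R^m$ gives $P_KH\subset H$ (Corollary \ref{fooalt}); both implications of the equivalence then drop out at once from the single reformulation $G\sqsubset_K\R^m\Leftrightarrow P_KG\subset G$. The paper never compares the two cone projections. It treats the two directions asymmetrically: for $G\sqsubset_K\R^m\Rightarrow G\sqsubset_{K\cap H}H$ it uses the other condition of Corollary \ref{fooalt}, namely $P_GK\subset K$, observing that $P_G(H\cap K)\subset P_GK\subset H\cap K$ because $P_GK\subset G\subset H$; for the converse it switches to the isotonicity language of Theorem \ref{ISOINV}, chaining the $K$-isotonicity of $P_H$ with the $(H\cap K)$-isotonicity of $P_G$ inside $H$ through the identity $P_G=P_GP_H$ for nested orthogonal projections. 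Your approach buys symmetry (one lemma handles both directions) and isolates a reusable geometric fact; it is, in spirit, the same mechanism the paper uses later in Theorem \ref{mell}, where $P_{Q_0}\m x=P_Q\m x$ on a subalgebra is obtained via Moreau's decomposition --- though there the coincidence of projections is the source of invariance, while for you the invariance of $H$ is the source of the coincidence. The paper's proof, by contrast, stays entirely inside the invariance/isotonicity dictionary that the rest of the paper builds on, exercising both equivalent conditions of Corollary \ref{fooalt} rather than just one.
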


\begin{proof}
In our proof we shall use without further comments the equivalences in Corollary \ref{fooalt}. 

Let $G\Subset H$ and $H\sqsubset_K \R^m$.

First suppose that $$G\sqsubset_K \R^m,$$ which is equivalent to  $$P_GK\subset K.$$ Hence, 
$$P_G(H\cap K)\subset P_GK\subset H\cap K,$$ since $P_G(K)\subset G\subset H.$ Thus, $G\sqsubset_{K\cap H}H$. 

Conversely, assume that $G\sqsubset_{H\cap K} H$. Take $\m x,\,\m y\in \R^m$ with $\m  x\leq_K\m  y.$ Then, from $H\sqsubset_K \R^m$ we have

$$P_H\m x\leq_K P_H\m y,\;\textrm{that is},\;P_H\m  y-P_H\m  x\in H\cap K,$$
hence 
$$P_H\m x\leq_{H\cap K}P_H\m y.$$
From $G\sqsubset_{H\cap K} H$ it follows that
$$ P_G P_H\m  x\leq_{H\cap K} P_G P_H\m  y.$$
From the property of orthogonal projections one has
$$P_G=P_G P_H.$$
Thus, the above relation writes as
$$P_G\m  x\leq_{H\cap K} P_G\m y,$$
or $P_G\m y -P_G\m x\in H\cap K \subset K.$
That is, 
$$P_G\m x\leq _K P_G\m y,$$
which shows that $G\sqsubset_K \R^m.$

\end{proof}

The following lemma is a direct consequence of
Lemma 3 and Lemma 8 in \cite{NemethNemeth2012b}:

\begin{lemma}\label{ti}
Suppose that $K$ is a closed convex cone in $\R^m$.
	Let $H(\m 0,\m a)\subset\R^m$ be a hyperplane 
	through the origin with unit normal vector $\m a\in\R^m$. Then, the following 
	assertions are equivalent:
	
	(i) $P_{H(\m 0,\m a)}$ is $K$-isotone;
	
	(ii) $P_{H(\m b,\m a)}$ is $K$-isotone for any $\m b\in \R^m$;
	
	(iii)  \[\lng\m  x,\m y\rng\ge\lng\m  a,\m x\rng\lng\m  a,\m y\rng,\] for any $\m x,\m y\in K$.
\end{lemma}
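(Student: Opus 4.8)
The plan is to establish the two equivalences (i)~$\Leftrightarrow$~(ii) and (i)~$\Leftrightarrow$~(iii) separately: the first by exploiting translation invariance of both the projection and the order $\leq_K$, and the second by reducing (i) to the subspace characterization of Corollary \ref{fooalt} and then converting the resulting membership condition into a scalar inequality via duality.

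For (i)~$\Leftrightarrow$~(ii) I would first observe that $\m b\in H(\m b,\m a)$, so $H(\m b,\m a)=\m b+H(\m 0,\m a)$, and then apply the translation formula (\ref{et}) to get
\[
P_{H(\m b,\m a)}\m y=\m b+P_{H(\m 0,\m a)}(\m y-\m b).
\]
Since $\leq_K$ is translation invariant, $\m x\leq_K\m y$ is equivalent to $\m x-\m b\leq_K\m y-\m b$; applying (i) to the translated pair and then adding $\m b$ back yields $P_{H(\m b,\m a)}\m x\leq_K P_{H(\m b,\m a)}\m y$, which is (ii). The reverse implication is immediate on choosing $\m b=\m 0$.

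The substance lies in (i)~$\Leftrightarrow$~(iii). Because $H(\m 0,\m a)$ is a subspace, $P_{H(\m 0,\m a)}$ is linear, and since $\|\m a\|=1$ it is given explicitly by $P_{H(\m 0,\m a)}\m z=\m z-\lng\m a,\m z\rng\m a$. By Theorem \ref{ISOINV} and Corollary \ref{fooalt} applied to $H=H(\m 0,\m a)$, assertion (i) is equivalent to $P_{H(\m 0,\m a)}K\subset K$, i.e. to $\m z-\lng\m a,\m z\rng\m a\in K$ for every $\m z\in K$. The decisive step is to turn this membership statement into the bilinear inequality by using self-duality of $K$: since $K=K^*$, one has $\m z-\lng\m a,\m z\rng\m a\in K$ if and only if $\lng\m z-\lng\m a,\m z\rng\m a,\m w\rng\ge 0$ for all $\m w\in K$, and expanding gives $\lng\m z,\m w\rng\ge\lng\m a,\m z\rng\lng\m a,\m w\rng$. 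Quantifying over all $\m z,\m w\in K$ then yields exactly (iii).

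The main obstacle, and the place where I would be most careful, is precisely this dualization. The identity $\lng\m z-\lng\m a,\m z\rng\m a,\m w\rng=\lng\m z,\m w\rng-\lng\m a,\m z\rng\lng\m a,\m w\rng$ rewrites (iii) as the assertion that $P_{H(\m 0,\m a)}\m z\in K^*$ for every $\m z\in K$, and it is only the self-duality $K=K^*$ that identifies this with $P_{H(\m 0,\m a)}K\subset K$, hence with (i). Without self-duality the two one-sided implications would require the separate inclusions $K\subset K^*$ and $K^*\subset K$; so before completing the equivalence I would make sure the self-duality hypothesis $K=K^*$ (which holds for the cone of squares in a Euclidean Jordan algebra) is in force.
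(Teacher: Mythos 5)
The paper offers no proof of Lemma \ref{ti} at all: it is simply declared to be ``a direct consequence of Lemma 3 and Lemma 8 in \cite{NemethNemeth2012b}''. So your self-contained argument is necessarily a different route, and most of it is sound. The equivalence (i) $\Leftrightarrow$ (ii), via $H(\m b,\m a)=\m b+H(\m 0,\m a)$, the translation formula (\ref{et}) and translation invariance of $\leq_K$, is correct exactly as written and holds for an arbitrary closed convex cone; the same is true of your reduction of (i), using linearity of $P_{H(\m 0,\m a)}$ (equivalently Theorem \ref{ISOINV} together with Corollary \ref{fooalt}), to the inclusion $P_{H(\m 0,\m a)}K\subset K$.

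However, the caveat you raise at the end about self-duality is the crux of the matter, not a formality. As you observe, (iii) says precisely that $P_{H(\m 0,\m a)}K\subset K^*$, while (i) says that $P_{H(\m 0,\m a)}K\subset K$; for a general closed convex cone these are genuinely different conditions, and the lemma as stated (with $K$ merely closed and convex) is in fact \emph{false}. For instance, in $\R^3$ take $K=\cone\{(1,0,0),(-1,1,0),(0,0,1)\}$ and $\m a=(0,0,1)$: then $P_{H(\m 0,\m a)}K=\cone\{(1,0,0),(-1,1,0)\}\subset K$, so (i) holds, yet $\m x=(1,0,0)$ and $\m y=(-1,1,0)$ violate (iii), since $\lng\m x,\m y\rng=-1<0=\lng\m a,\m x\rng\lng\m a,\m y\rng$. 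Conversely, $K=\cone\{(1,0),(1,1)\}$ in $\R^2$ with $\m a=(1,0)$ satisfies (iii) but not (i). Thus self-duality $K=K^*$ (or else restating (iii) with $\m x\in K$, $\m y\in K^*$, which is the version valid for general cones) is not optional but necessary; once it is assumed, your dualization closes the equivalence immediately. Since every application of Lemma \ref{ti} in this paper is to the cone of squares $Q$, which is self-dual, your proof covers everything the paper actually needs --- but the self-duality hypothesis should be added to the statement rather than left as a conditional remark at the end of the proof.
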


\section{Lattice-like subspaces of the Euclidean Jordan algebra}

In the particular case of a self-dual cone $K\subset \R^m$, J. Moreau's theorem (\cite{Moreau1962}) reduces to
the following lemma:

\begin{lemma}\label{lm}
Let $K\subset \R^m$ be a self-dual cone. Then, for any $\m x\in \R^m$
the following two conditions are equivalent:

(i) $\m x=\m u-\m v,\;\m u,\m v\in K, \lng\m  u,\m v\rng =0,$

(ii) $\m u=P_K\m x,\;\m v=P_K(-\m x).$
 
\end{lemma}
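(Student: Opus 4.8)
The plan is to derive the equivalence directly from the variational characterization (\ref{charac}) of the metric projection, exploiting the self-duality $K=K^*$ at the one decisive step; equivalently, the statement is exactly the specialization of J. Moreau's decomposition theorem to the polar pair $(K,-K)$, which coincides with $(K,K^\circ)$ precisely because $K$ is self-dual. I would first isolate the tool that drives everything: \emph{for $\m x\in\R^m$ and $\m u\in K$, one has $\m u=P_K\m x$ if and only if $\m u-\m x\in K$ and $\lng\m u,\m u-\m x\rng=0$.} To establish this, apply (\ref{charac}) with $D=K$, giving $\lng\m u-\m x,\m u-\m y\rng\le 0$ for all $\m y\in K$. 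Testing with $\m y=\m 0$ and with $\m y=2\m u$ (both in the cone $K$) forces $\lng\m u-\m x,\m u\rng=0$; the inequality then collapses to $\lng\m u-\m x,\m y\rng\ge 0$ for every $\m y\in K$, i.e.\ $\m u-\m x\in K^*=K$. The converse follows by reversing these computations, and uniqueness is free since the nearest point is unique.

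For the implication (i)$\Rightarrow$(ii), assume $\m x=\m u-\m v$ with $\m u,\m v\in K$ and $\lng\m u,\m v\rng=0$. Then $\m u-\m x=\m v\in K$ and $\lng\m u,\m u-\m x\rng=\lng\m u,\m v\rng=0$, so the characterization gives $\m u=P_K\m x$. Symmetrically, from $-\m x=\m v-\m u$ one gets $\m v-(-\m x)=\m u\in K$ and $\lng\m v,\m v-(-\m x)\rng=\lng\m v,\m u\rng=0$, whence $\m v=P_K(-\m x)$, which is (ii).

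For (ii)$\Rightarrow$(i), set $\m u=P_K\m x$ and $\m v=P_K(-\m x)$, both in $K$. The characterization gives $\m p:=\m u-\m x\in K$ with $\lng\m u,\m p\rng=0$. I would then observe that this same $\m p$ satisfies the characterization for the projection of $-\m x$: indeed $\m p\in K$, $\m p-(-\m x)=\m u\in K$, and $\lng\m p,\m p-(-\m x)\rng=\lng\m p,\m u\rng=0$. By uniqueness of the nearest point, $\m p=P_K(-\m x)=\m v$, so $\m x=\m u-\m v$ and $\lng\m u,\m v\rng=\lng\m u,\m p\rng=0$, which is (i). Alternatively the sum relation can be read off from (\ref{en}) together with the Moreau identity $\m x=P_K\m x+P_{-K}\m x$.

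The only genuinely delicate point is the orthogonality-and-duality characterization of the projection onto a self-dual cone; once it is in hand, both implications reduce to short verifications sealed by the uniqueness of the metric projection. The hypothesis $K=K^*$ enters in exactly one place — the passage from $\lng\m u-\m x,\m y\rng\ge 0$ for all $\m y\in K$ to $\m u-\m x\in K$ — and this is precisely where the argument would break down for a merely closed convex cone.
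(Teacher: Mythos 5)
Your proof is correct. It is worth noting, though, that the paper itself gives no argument at all: the lemma is stated as the immediate specialization of J.~Moreau's decomposition theorem \cite{Moreau1962} to a self-dual cone, so the ``paper's proof'' is a citation. You instead build everything from the variational characterization (\ref{charac}): you first prove the key intermediate fact that for $\m u\in K$ one has $\m u=P_K\m x$ if and only if $\m u-\m x\in K$ and $\lng \m u,\m u-\m x\rng =0$ (the tests $\m y=\m 0$ and $\m y=2\m u$ force the orthogonality, and self-duality $K^*=K$ converts the residual inequality into membership in $K$), and then both implications (i)$\Rightarrow$(ii) and (ii)$\Rightarrow$(i) become short verifications sealed by uniqueness of the nearest point; the transfer step in (ii)$\Rightarrow$(i), where you check that $\m p=\m u-\m x$ itself satisfies the characterization for the projection of $-\m x$, is exactly right. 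What your route buys is a self-contained, elementary proof that pinpoints the single place where $K=K^*$ is indispensable; what the paper's route buys is brevity and context, since the statement is literally Moreau's identity $\m x=P_K\m x+P_{K^\circ}\m x$ with orthogonal summands, where for a self-dual cone the polar satisfies $K^\circ=-K$, so that (\ref{en}) gives $P_{K^\circ}\m x=-P_K(-\m x)$ --- the derivation you yourself sketch in your closing aside.
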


In all what follows we will consider that the ordered Euclidean space is
$(V,Q)$, the Euclidean Jordan
algebra $V$ of unit $\m e$ ordered by the cone $Q$ of squares in $V$. All the terms concerning $V$
will be equally used for $(V,Q)$.

Since the hyperplanes in Theorem \ref{FOOO} play an important role,
and since the l-l property is invariant with respect to translations
(Lemma 3, \cite{NemethNemeth2012a}), it is natural to study the l-l subspaces in $V$ which are naturally connected with  
the algebraic structure of this space.

\begin{theorem}\label{mell}

Any Jordan subalgebra of $(V,Q)$ is a lattice-like subspace.

\end{theorem}

\begin{proof}
 
Take a Jordan subalgebra $L$ in $V$ and denote by
$Q_0$ its cone of squares. We have
\begin{equation}\label{omeg}
Q_0=\{\m x^2:\,x\in L\}\subset \{\m x^2:\,x\in V\}=Q.
\end{equation}
We shall prove that 
\begin{equation}\label{omega=}
\m x\in L \Rightarrow P_{Q}\m x=P_{Q_0}\m x \in L.
\end{equation}
Indeed, we have, by Lemma \ref{lm} applied in the ordered
vector space $(L, Q_0)$, that
\begin{equation}\label{e8b}
	\m x=P_{Q_0}\m x-P_{Q_0}(-\m x),\;\; \lng P_{Q_0}\m x,P_{Q_0}(-\m x)\rng =0,
\end{equation}

By (\ref{omeg}) 
$$P_{Q_0}\m x,\;\; P_{Q_0}(-\m x)\in Q_0 \subset Q,$$
which, by equations \eqref{e8b} and Lemma \ref{lm}, yield $P_{Q_0}\m x=P_{Q}\m x$, or equivalently 
(\ref{omega=}).

Accordingly $P_QL\subset L,$ which by Corollary \ref{fooalt},
translates into $L\sqsubset_Q V.$
\end{proof}

\section{The Pierce decomposition of the Euclidean Jordan algebra and its lattice-like subspaces}
\label{pierce}

Let $r$ be the rank of $V$  and
$\{\m c_1,\dots,\m c_r\}$ be an arbitrary Jordan frame in $V$, that is, $\m c_k$ are primitive idempotents such that 
$$\m c_i\m c_j=0,\;\; \textrm{if}\;\; i\not=j,\;\; \m c_i^2=\m c_i,$$
$$\m c_1+\dots +\m c_r=\m e.$$
With the notation
$$V_{ii}=V(\m c_i,1)=\R \m c_i,$$
$$V_{ij}= V\lf(\m c_i,\frac{1}{2}\rg)\cap V\lf(\m c_j,\frac{1}{2}\rg),$$
(where for $\lambda \in \R$, $V(\m c_i, \lambda)=\{\m x\in V:\,\m c_i\m x=\lambda\m  x\}$),
we have by Theorem IV.2.1. \cite{FarautKoranyi1994} the following orthogonal
decomposition (the so-called \emph{Pierce decomposition}) of $V$:
\begin{equation}\label{Oplus}
V=\bigoplus_{i\leq j} V_{ij},
\end{equation}
where
\begin{equation}\label{Oplus1}
V_{ij}V_{ij}\subset V_{ii}+V_{jj};\;
V_{ij}V_{jk}\subset V_{ik},\;\;\textrm{if}\;\;i\not= k;\;
V_{ij}V_{kl}=\{0\}, \;\;\textrm{if}\;\; \{i,j\}\cap \{k,l\}=\emptyset.
\end{equation}

Taking for $1\leq k< r$
\begin{equation}\label{Oplus2}
V^{(k)}= \bigoplus_{i\leq j\leq k} V_{ij}
\end{equation}
is a Jordan algebra
with the unit
$$\m e_k=\m c_1+\dots +\m c_k.$$
Indeed, relations (\ref{Oplus1}) imply the invariance of $V^{(k)}$
with respect to the Jordan product. The same relations and the definitions imply
$\m e_k\m x_{ii}=\m c_i\m x_{ii}=\m x_{ii}$, for any $\m x_{ii}\in V_{ii}$ and $i\le k$; 
$\m c_lV_{ij}=\{0\}\textrm{ if }l\notin \{i,j\}$;
$\m e_k \m x_{ij}=(\m c_i+\m c_j)\m x_{ij}=\m x_{ij}$, for any $\m x_{ij}\in V_{ij}$ and $i,j\le k$, $i\ne j$. 
Hence $\m e_k$ is the unity of $V^{(k)}$. These relations also imply that
\begin{equation}\label{vkszubalg}
V^{(k)}=V(\m e_k,1)=\{\m x\in V:\,\m e_k\m x=\m x\}.
\end{equation}
Thus, $V(\m e_k,1)$ is a subalgebra (this follows also by Proposition IV.1.1 in \cite{FarautKoranyi1994} since
$\m e_k$ is idempotent). Hence
by Theorem \ref{mell}, $V(\m e_k,1)$ is an l-l subspace in $(V, Q)$.

A Jordan algebra is said \emph{simple} if it contains no nontrivial ideal.

A consequence of the above cited theorem and
the content of paragraph IV.2. of \cite{FarautKoranyi1994} is that
$V$ is simple if and only if $V_{ij}\not= \{0\}$
for any $V_{ij}$ in (\ref{Oplus}). By the same conclusion $V^{(k)}$ given by (\ref{Oplus2}) is simple too, and
by Corollary IV.2.6. in \cite{FarautKoranyi1994} the spaces $V_{ij},\;i\not= j$ have
the common dimension $d$, hence by (\ref{Oplus2}) 
$$\dme V^{(k)} =k+\frac{d}{2}k(k-1).$$

The subcone $F\subset Q$ is called a \emph{face of  $Q$} if whenever $0\leq_Q\m  x\leq_Q\m  y$ and $\m y\in F$
it follows that $\m x\in F$.

It is well known that for an arbitrary face $F$ of $Q$ one has $P_{\spa F}Q\subset Q$
(see e.g. Proposition II.1.3 in \cite{Iochum1984}). Hence by Corollary \ref{fooalt} it follows the assertion:

\begin{corollary}\label{facealg}
Each subspace generated by some face of $Q$ is a lattice-like subspace in $(V,Q)$.
\end{corollary}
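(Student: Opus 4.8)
The plan is to verify condition 3 of Corollary \ref{fooalt}, namely that $P_{\spa F}Q\subset Q$ for the subspace $\spa F$ generated by a face $F$ of $Q$; once this projection property is in hand, the l-l property of $\spa F$ follows at once from the equivalence of conditions 1 and 3 in Corollary \ref{fooalt}. Thus the entire content of the statement is concentrated in establishing $P_{\spa F}Q\subset Q$, and everything else is a routine appeal to the machinery already assembled above.

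To obtain this projection property I would exploit the structure theory of faces of a symmetric cone rather than argue with the face abstractly. Recall that every face $F$ of the symmetric cone $Q$ is of the form $F=Q\cap V(\m c,1)$ for a suitable idempotent $\m c\in V$, where $V(\m c,1)=\{\m x\in V:\,\m c\m x=\m x\}$ is the $1$-eigenspace of $\m c$. By Proposition IV.1.1 in \cite{FarautKoranyi1994} --- the same fact used above to identify $V(\m e_k,1)$ as a subalgebra --- the space $V(\m c,1)$ is a Jordan subalgebra of $V$ with unit $\m c$. Its own cone of squares $Q_0=\{\m x^2:\,\m x\in V(\m c,1)\}$ coincides with $Q\cap V(\m c,1)=F$, because the spectrum of an element is intrinsic to the element, so membership in $Q$ detects exactly the positive-semidefinite elements of the subalgebra. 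Since the cone of squares of a Euclidean Jordan algebra is generating, $\spa F=\spa Q_0=V(\m c,1)$.

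With $\spa F$ identified as the Jordan subalgebra $V(\m c,1)$, the conclusion is immediate from Theorem \ref{mell}: any Jordan subalgebra of $(V,Q)$ is a lattice-like subspace, hence so is $V(\m c,1)=\spa F$. This route bypasses any direct verification of $P_{\spa F}Q\subset Q$, since that verification is precisely what the proof of Theorem \ref{mell} already carries out. Alternatively, one may cite the known identity $P_{\spa F}Q\subset Q$ outright (Proposition II.1.3 in \cite{Iochum1984}) and finish through Corollary \ref{fooalt} without ever invoking the subalgebra description.

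The main obstacle is the classification of faces of $Q$ as $1$-eigenspaces $Q\cap V(\m c,1)$: this is the single ingredient that is not internal to the present development and must be imported from the structure theory of Euclidean Jordan algebras (or, equivalently, one imports Iochum's projection estimate directly). Once the face is pinned down as the cone of squares of a subalgebra, no further analytic work is needed, the geometric content having already been settled in Corollary \ref{fooalt} and Theorem \ref{mell}.
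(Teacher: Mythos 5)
Your proposal is correct and takes essentially the same route as the paper's own proof: the paper likewise invokes the classification of faces as $F=V(\m e_k,1)\cap Q$ for an idempotent $\m e_k$ (citing Theorem 3.1 of Gowda--Sznajder), identifies $\spa F$ with the subalgebra $V(\m e_k,1)$ via the fact that $F$ is its cone of squares with nonempty relative interior, and concludes by Theorem \ref{mell}. The alternative you mention, citing Iochum's $P_{\spa F}Q\subset Q$ and finishing with Corollary \ref{fooalt}, is exactly how the paper motivates the corollary before presenting its independent proof.
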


We give an independent proof of this.

\begin{proof}
Let $\{\m c_1,...,\m c_r\}$ be a Jordan frame in $V$, $k\leq r$. If
$$\m e_k=\m c_1+\dots+\m c_k, \quad 0\leq k\leq r,$$
then by Theorem 3.1 in \cite{GowdaSznajder2006}
$$F=V(\m e_k,1)\cap Q=\{x\in Q:\,\m e_k\m  x=\m x\}$$
is a face of $Q$ and each face of $Q$ can be represented in this form for some Jordan frame.

The cone $F=V(\m e_k,1)\cap Q$ is the cone of squares in the subalgebra $V(\m e_k,1)$,
hence its relative interior is non-empty, accordingly

$$V(\m e_k,1)=\spa F=F-F.$$ 

Since $V(\m e_k,1)$ is a subalgebra, by Theorem \ref{mell} it is an l-l subspace.
\end{proof}

\section{The subalgebras and the lattice-like subspaces of
the space spanned by a Jordan frame}\label{orthant}

Suppose that the dimension of the Euclidean Jordan algebra $V$ is at least $2$. 
Let $\{\m c_1,...,\m c_r\}$ be a Jordan frame in $V$.
Then, 
$$V_{r}:=\spa\{\m c_1,...,\m c_r\}$$
is a Jordan subalgebra of $V$. Obviously, $V_r=V_{11}\oplus\dots\oplus V_{rr}$. If 
$\m x,\m y\in V_r$, then $$\m x\m y=(x_1y_1,...,x_ry_r),$$
where $x_i$ and $y_i$ are the coordinates of $\m x$, respectively $\m y$
with respect to the above Jordan frame.

By using the notations of the above section, denote $Q_r=Q\cap V_r$ and let us show that
$$Q_r=\cone \{\m c_1,...,\m c_r\}:=\lf\{\sum_{i=1}^r\lambda_i\m c_i:\lambda_i\ge0,\textrm{ }\forall 1\le i\le r\rg\}.$$

The inclusion $\cone\{\m c_1,...,\m c_r\}\subset Q_r$ is obvious. Next, we show that $Q_r\subset\cone\{\m c_1,...,\m c_r\}$. Suppose to the contrary, that
there exists $\m x\in Q_r\setminus\cone\{\m c_1,...,\m c_r\}$. It follows that $\lng \m c_k,\m x\rng <0$ for some $k\in\{1,\dots,r\}$. Since $Q$ is selfdual, 
this implies $\m x\notin Q$, which is a contradiction.

 The ordered vector space $(V_r,Q_r)$ can be considered
an $r$-dimensional Euclidean vector space ordered
with the positive orthant $Q_r$ engendered by the Jordan frame.

Let $H_{r-1}$ be an l-l hyperplane in $(V_r,Q_r)$, with the unit normal
$\m a\in V_r$. Thus, the results in \cite{NemethNemeth2012a} and \cite{NemethNemeth2012b} applies,
hence if 
\begin{equation}\label{a}
 \m a=(a_1,...,a_r),
\end{equation}
then we must have
\begin{equation}\label{aa}
a_ia_j\leq 0,\;\;\textrm{if}\;\; i\not=j.
\end{equation}

Then, there are two possibilities:
\begin{description}
	\item {\bf Case 1}. There exists an $i$ such that $a_i=1$ and $a_j=0$ for $j\not=i$.
	\item {\bf Case 2}. There are only two nonzero coordinates, say $a_k$ and $a_l$ with $a_ka_l<0$.
\end{description}
\begin{description}
	\item {\bf Ad 1}. In the {\bf Case 1} $$H_{r-1}=\spa\{\m c_1,...,\m c_{i-1},\m c_{i+1},...,\m c_r\}$$ and $H_{r-1}$ is obviously a Jordan algebra.
	\item {\bf Ad 2}. In the {\bf Case 2} $$H_{r-1} = \{\m x\in V_r: a_kx_k+a_lx_l=0\}.$$ We know from the above cited result, that $H_{r-1}$ is an l-l 
		subspace in $(V_r,Q_r)$ and since $V_r$ is a subalgebra of $V$, by Theorem \ref{mell},  $V_r\sqsubset_Q V$. By using Corollary 
		\ref{fooaltkov} we have, for the l-l subspace $H_{r-1}\sqsubset_{Q_r} V_r$, that 
		$$H_{r-1}\sqsubset_Q V.$$
\end{description}
In the case {\bf Ad 1} the l-l hyperplane $H_{r-1}$ is also a Jordan algebra. 

Suppose that {\bf Ad 2} holds. We would like to see under which condition
the l-l hyperplane $H_{r-1}$ is a Jordan algebra.

Let us suppose that $H_{r-1}$ is a Jordan algebra, and take
$\m x\in H_{r-1}$, $\m x=(x_1,...,x_r)$. Then, $\m x^2 =(x_1^2,...,x_r^2)\in H_{r-1}.$
Take $\m x$ with $x_l=a_k$ and $x_k=-a_l$. Then, $\m x\in H_{r-1}$ and we
must have $\m x^2\in H_{r-1}$. Hence
$$a_ka_l^2+a_la_k^2=a_ka_l(a_l+a_k)=0,$$
and since $a_ka_l\not=0,$ we must have
$$ a_k=\frac{\sqrt 2}{2}\qquad a_l=-\frac{\sqrt 2}{2},$$
or conversely.
In this case 
\begin{equation}\label{k=l}
 H_{r-1}= \{\m x:x_k=x_l\}
\end{equation}
is obviously a subalgebra.

\begin{remark}\label{nemalgebra} 
For any hyperplane $H_{r-1}$ in $V_r$ with the unit normal $\m a$ having only
two nonzero components with opposite signs and different absolute values $H_{r-1}$
is an l-l subspace, but not a Jordan subalgebra.
\end{remark}

If $\m a\in V$ is arbitrary, then there exists a Jordan frame $\{\m c_1,...,\m c_r\}$ such that
$\m a$ can be represented in the form (\ref{a}) (Theorem III.1.2 in \cite{FarautKoranyi1994}). We will call such a Jordan frame as
being \emph{attached to} $\m a$.

\begin{corollary}\label{aaa}
Let $H$ be a lattice-like hyperplane in $(V,Q)$ with the normal
$\m a$ and $\{\m c_1,...,\m c_r\}$ be a Jordan frame attached to it.
If $\m a$ is represented by  (\ref{a}), then the coordinates $a_i,\,i=1,...,r$
of $\m a$ satisfy the relations (\ref{aa}).
\end{corollary}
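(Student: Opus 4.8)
The plan is to reduce the statement to the quadratic inequality characterizing lattice-like hyperplanes in Lemma \ref{ti}, and then to probe that inequality with the idempotents of the frame attached to $\m a$. By Theorem \ref{ISOINV} the hyperplane $H$ is lattice-like exactly when $P_H$ is $Q$-isotone, and by the equivalence of (i), (ii) and (iii) in Lemma \ref{ti} this property depends only on the normal direction (not on the translate of $H$) and amounts to
\begin{equation*}
\lng \m x,\m y\rng \ge \lng \m a,\m x\rng\lng \m a,\m y\rng,\qquad \forall\,\m x,\m y\in Q,
\end{equation*}
where I take $\m a$ to be the unit normal (scaling $\m a$ by a positive factor changes neither $H$ nor the signs of the products $a_ia_j$).

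The key point is that the primitive idempotents $\m c_1,\dots,\m c_r$ are admissible test vectors for this inequality: each is a square, $\m c_i=\m c_i^2\in Q$. I would first record two identities. Since the inner product of the Euclidean Jordan algebra is associative and $\m c_i\m c_j=\m 0$ for $i\ne j$, one has $\lng \m c_i,\m c_j\rng=\lng \m c_i\m c_i,\m c_j\rng=\lng \m c_i,\m c_i\m c_j\rng=0$; and from the spectral representation $\m a=\sum_{k=1}^r a_k\m c_k$ together with $\lng \m c_k,\m c_i\rng=\delta_k^i$ one gets $\lng \m a,\m c_i\rng=a_i$. Substituting $\m x=\m c_i$ and $\m y=\m c_j$ with $i\ne j$ then collapses the inequality to
\begin{equation*}
0=\lng \m c_i,\m c_j\rng \ge \lng \m a,\m c_i\rng\lng \m a,\m c_j\rng=a_ia_j,
\end{equation*}
which is precisely (\ref{aa}).

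I do not expect a genuine obstacle here; the argument is a one-line specialization of Lemma \ref{ti}. The only point meriting a remark is the normalization used to identify $\lng \m a,\m c_i\rng$ with $a_i$: even if the frame were merely orthogonal with $\lng \m c_i,\m c_i\rng=\gamma_i>0$ rather than orthonormal, the test inequality would read $0\ge a_ia_j\gamma_i\gamma_j$, and positivity of $\gamma_i,\gamma_j$ still forces $a_ia_j\le 0$, so the conclusion is insensitive to the admissible inner product chosen on $V$. In effect the corollary lifts the constraint (\ref{aa}), established in Section \ref{orthant} only for normals lying in $V_r$, to the normal of an arbitrary lattice-like hyperplane of $(V,Q)$ once it is put in spectral (diagonal) form.
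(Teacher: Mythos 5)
Your proof is correct, but it follows a genuinely different route from the paper's. The paper's own proof never tests the inequality of Lemma \ref{ti} directly: it forms $H_{r-1}=H\cap V_r$, where $V_r=\spa\{\m c_1,\dots,\m c_r\}$, observes that this is lattice-like as the intersection of two lattice-like sets ($H$ by hypothesis, and the subalgebra $V_r$ by Theorem \ref{mell}), transfers the property to the ordered space $(V_r,Q_r)$ (the mechanism being Corollary \ref{fooaltkov}), and then invokes the characterization of lattice-like hyperplanes with respect to the orthant $Q_r$ already established in Section \ref{orthant}, which is precisely (\ref{aa}). You instead work in the ambient space $(V,Q)$: after normalizing $\m a$ and using Theorem \ref{ISOINV} together with the translation-invariance built into Lemma \ref{ti}, you probe the inequality $\lng\m x,\m y\rng\ge\lng\m a,\m x\rng\lng\m a,\m y\rng$ with $\m x=\m c_i$, $\m y=\m c_j$, $i\ne j$; these lie in $Q$ as squares, are orthogonal by associativity of the inner product, and $\lng\m a,\m c_i\rng=a_i\|\m c_i\|^2$ yields $0\ge a_ia_j\|\m c_i\|^2\|\m c_j\|^2$, hence (\ref{aa}). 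Your argument is shorter and more self-contained (it bypasses both Corollary \ref{fooaltkov} and the orthant characterization of Section \ref{orthant}, the latter resting on results cited from earlier papers), and it is in fact the same technique the paper deploys later in Theorem \ref{nincs} and Lemma \ref{ketdim}, where orthogonal idempotents such as those in (\ref{xn}) are fed into Lemma \ref{ti}; your closing remarks on scaling of the normal and on non-orthonormal frames dispose of the only fine print. What the paper's detour buys is the intermediate structural fact that $H\cap V_r$ is itself a lattice-like hyperplane of the orthant-ordered space $(V_r,Q_r)$, which ties the corollary to the two-case classification of Section \ref{orthant} that is reused verbatim in the proof of Theorem \ref{nincs}.
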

\begin{proof}
If $V_r=\spa\{\m c_1,...,\m c_r\}$, then $H_{r-1}=H\cap V_r$ is
an l-l hyperplane in $(V_r,Q_r)$ with the normal $\m a$ because it is the 
intersection of two l-l sets: $V_r$ (a subalgebra) and $H$. Thus, we
can apply the characterization of l-l hyperplanes in $(V_r,Q_r)$ described
above in this section.
\end{proof}

Denote by $\mc F (Q)$ the family of faces of $Q$, by $\mc A$ the family of subalgebras of $V$
and by $\mc L$ the family of the l-l subspaces in $V$. Then, by the above reasonings, we conclude the

\begin{corollary}
We have the following strict inclusions:
$$\{\spa F:\,F\in \mc F (Q)\} \subset \mc A \subset \mc L.$$
\end{corollary}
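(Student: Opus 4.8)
The plan is to read off the two inclusions from results already proved and then to produce explicit witnesses establishing that each inclusion is strict. The inclusion $\mc A\subset\mc L$ is exactly Theorem \ref{mell}: every Jordan subalgebra is l-l. For $\{\spa F:\,F\in\mc F(Q)\}\subset\mc A$ I would not prove anything new but extract it from the proof of Corollary \ref{facealg}, where each face is written as $F=V(\m e_k,1)\cap Q$ with $\m e_k=\m c_1+\dots+\m c_k$ and its span is identified with the subalgebra $V(\m e_k,1)$. More conceptually, the span of any face $F$ is the Pierce-$1$ subspace $V(\m c,1)$ of the idempotent $\m c$ generating $F$, and $V(\m c,1)$ is a subalgebra whose Jordan unit is $\m c$; this identification is the structural fact I shall reuse for the strictness argument.

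Next I would establish $\mc A\subsetneq\mc L$, for which Remark \ref{nemalgebra} already supplies the witness. Fixing a Jordan frame $\{\m c_1,\dots,\m c_r\}$ with $r\ge2$, take the hyperplane $H_{r-1}=\{\m x\in V_r:\,a_kx_k+a_lx_l=0\}$ with $a_ka_l<0$ and $|a_k|\ne|a_l|$. By the analysis of Section \ref{orthant} (case Ad 2) it is an l-l subspace of $(V,Q)$, while the computation in Remark \ref{nemalgebra} shows it fails to be a subalgebra of $V_r$; since the Jordan product of $V$ restricts to that of $V_r$, it is not a subalgebra of $V$ either. Hence $H_{r-1}\in\mc L\setminus\mc A$.

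Finally, for $\{\spa F:\,F\in\mc F(Q)\}\subsetneq\mc A$ I would exhibit a subalgebra that is the span of no face, the natural candidate being the diagonal subalgebra $V_r=\spa\{\m c_1,\dots,\m c_r\}$ of Section \ref{orthant}. The decisive step — and the main obstacle — is to show $V_r$ is not of the form $\spa F$. Using the identification above, $\spa F=V(\m c,1)$ for some idempotent $\m c$, and the unit of this subalgebra is $\m c$. Since $\m e=\m c_1+\dots+\m c_r\in V_r$ acts as a unit on $V_r$ and the Jordan unit is unique, $V_r=V(\m c,1)$ would force $\m c=\m e$ and hence $V_r=V(\m e,1)=V$. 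This is impossible as soon as $V_r\subsetneq V$, i.e. whenever some $V_{ij}$ with $i\ne j$ is nonzero; for a simple algebra of rank $r\ge2$ one has $\dme V_r=r<r+\frac{d}{2}r(r-1)=\dme V$ with $d\ge1$. Thus $V_r\in\mc A\setminus\{\spa F\}$, realized for instance in $(\sm,\smp)$ with $m\ge2$ by the subalgebra of diagonal matrices.

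The genuinely delicate point is precisely this structural characterization of face-spans as Pierce-$1$ subspaces together with uniqueness of the Jordan unit, which is what forbids the diagonal subalgebra from being a face-span; both inclusions and the remaining strictness follow immediately from the cited results, so the proof reduces to assembling these observations and recording that the strict inclusions are witnessed whenever $V$ is rich enough (rank at least two, and for the first inclusion a simple factor with $d\ge1$).
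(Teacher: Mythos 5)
Your inclusions and your witness for $\mc A \subset \mc L$ being strict coincide with the paper's: both inclusions are read off from Theorem \ref{mell} and Corollary \ref{facealg}, and Remark \ref{nemalgebra} supplies the l-l subspace that is not a subalgebra. Where you genuinely diverge is the strictness of the first inclusion. The paper's witness is the subalgebra $\{\m x\in V_r:\,x_1=x_2\}$ from (\ref{k=l}), and it is disqualified as a face-span concretely: the paper exhibits $\m 0\leq_Q \m x\leq_Q \m y$ with $\m y=(1,1,0,\dots,0)\in H_{r-1}\cap Q$ but $\m x=(1,0,\dots,0)\notin H_{r-1}\cap Q$, so $H_{r-1}\cap Q$ is not a face, whereas a face-span always intersects $Q$ in the original face. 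You instead take the whole diagonal $V_r$ and rule it out structurally: by Theorem 3.1 in \cite{GowdaSznajder2006} every face-span is $V(\m c,1)$ for an idempotent $\m c$, the unit of that subalgebra is $\m c$, and since $\m e\in V_r$ also acts as a unit, uniqueness of the Jordan unit forces $\m c=\m e$ and hence $V_r=V$. That argument is correct as far as it goes, and it is a nice conceptual alternative to the paper's order-interval computation.

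The gap is its range of validity. Your witness only produces a contradiction when $V_r\subsetneq V$, i.e.\ when some simple factor of $V$ has rank at least $2$. The corollary stands in Section \ref{orthant} under the sole hypothesis $\dim V\geq 2$, which admits $V\cong\R^m$ with the componentwise product, $Q=\R^m_+$. In that case $V_r=V=\spa Q$ is itself a face-span, so your candidate proves nothing; yet the inclusion is still strict there (the diagonal $\{\m x:\,x_1=x_2\}$ is a subalgebra of $\R^m$ but not a coordinate subspace, hence not a face-span), and the paper's proof covers exactly this situation. You acknowledge the restriction, but as written your proof does not establish the corollary in the generality in which it is stated. The repair is cheap and keeps your method intact: apply the unit-uniqueness argument to the paper's witness $W=\{\m x\in V_r:\,x_1=x_2\}$ rather than to $V_r$. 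Since $\m e\in W$, if $W=\spa F=V(\m c,1)$ then $\m c$ and $\m e$ are both units of $W$, so $\m c=\m e$ and $W=V(\m e,1)=V$, contradicting $\dim W=r-1<\dim V$; this works for every $V$ of rank at least $2$, including $\R^m$.
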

\begin{proof}
The second strict inclusion follows from Remark \ref{nemalgebra}.
The first inclusion is strict since for instance
the subspaces in (\ref{k=l}) are subalgebras which
are not generated by faces of $Q$. Indeed, take in $V_r$ the reference system
engendered by $\m c_1,...,\m c_r$ and let
$$H_{r-1}=\{(t,t,x_{r+3},...,x_r)\in V_r:t,x_j\in \R\}.$$
Take $\m y=(1,1,0,...,0)\textrm{ and }\m x=(1,0,0,...,0)\textrm{ in }Q_r=Q\cap V_r.$
Since in $V_r,\;\leq_Q=\leq_{Q_r}$ and the latter is coordinate-vise ordering,
$$0\leq_Q\m  x\leq_Q\m  y,$$
and we have $\m y\in H_{r-1}\cap Q$, but $\m x\notin H_{r-1}\cap Q$, which shows that
$H_{r-1}\cap Q$ is not a face.
\end{proof}

\section{ The inexistence
of lattice-like hyperplanes in\\ simple Euclidean Jordan algebras of rank $r\geq 3$.}

\begin{theorem}\label{nincs}
Suppose that $V$ is a simple Euclidean Jordan algebra of rank $r\geq 3$. 
Then, $V$ does not contain lattice-like hyperplanes.
\end{theorem}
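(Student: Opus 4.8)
The plan is to reduce the statement to the pointwise inequality of Lemma \ref{ti} and then, for every normal direction that survives Corollary \ref{aaa}, to produce two elements of $Q$ that break it. Suppose $H$ were a lattice-like hyperplane. Combining Theorem \ref{ISOINV} with the equivalence (i)$\Leftrightarrow$(ii)$\Leftrightarrow$(iii) of Lemma \ref{ti}, a hyperplane with unit normal $\m a$ is lattice-like if and only if
\begin{equation*}
\lng\m x,\m y\rng\ge\lng\m a,\m x\rng\lng\m a,\m y\rng\qquad\textrm{for all }\m x,\m y\in Q,
\end{equation*}
so it suffices to refute this inequality. I would fix a Jordan frame $\{\m c_1,\dots,\m c_r\}$ attached to $\m a$, write $\m a=\sum_{i=1}^r a_i\m c_i$, and apply Corollary \ref{aaa} to get $a_ia_j\le 0$ for $i\ne j$. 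Hence at most one coordinate of $\m a$ is positive and at most one is negative, and because $r\ge 3$ at least one coordinate vanishes. This is precisely where the rank hypothesis enters: it supplies a \emph{neutral} index carrying no component of $\m a$.

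Next I would use simplicity to build the violating pair along that neutral index. Choose $p$ with $a_p\ne 0$ (possible since $\m a\ne\m 0$) and $q\ne p$ with $a_q=0$ (possible by the previous step). Since $V$ is simple of rank $r\ge 3$, the Pierce space $V_{pq}$ is nontrivial; pick a nonzero $\m u\in V_{pq}$. As $\m u^2$ is a positive multiple of $\m c_p+\m c_q$ (a standard consequence of the Pierce relations (\ref{Oplus1})), I may rescale $\m u$ so that $\m u^2=\m c_p+\m c_q$, whence $\lng\m u,\m u\rng=\tr(\m u^2)=2$. Setting
\begin{equation*}
\m x=\m c_p+\m c_q+\m u,\qquad\m y=\m c_p+\m c_q-\m u,
\end{equation*}
a direct computation using $\m c_p\m c_q=\m 0$ and $\m c_p\m u=\m c_q\m u=\tfrac12\m u$ gives $\m x^2=2\m x$ and $\m y^2=2\m y$, so both lie in $Q$. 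These $\m x$ and $\m y$ are the abstract analogues of the rank-one positive semidefinite matrices one would use in the model $V=\sm$, and turning that rank-one intuition into honest squares adapted to the frame attached to $\m a$ is the step I expect to require the most care.

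Finally I would evaluate both sides of the criterion with the orthogonality of the Pierce decomposition and $\lng\m c_i,\m c_j\rng=\delta_i^j$. Since $\m u$ is orthogonal to every $V_{ii}$ and $a_q=0$, this yields $\lng\m a,\m x\rng=\lng\m a,\m y\rng=a_p$, while
\begin{equation*}
\lng\m x,\m y\rng=\lng\m c_p,\m c_p\rng+\lng\m c_q,\m c_q\rng-\lng\m u,\m u\rng=1+1-2=0.
\end{equation*}
Thus $\lng\m x,\m y\rng=0<a_p^2=\lng\m a,\m x\rng\lng\m a,\m y\rng$, contradicting the displayed inequality; hence $V$ admits no lattice-like hyperplane. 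The only genuinely delicate points are the membership $\m x,\m y\in Q$ and the normalization of $\m u$, both of which rest on the elementary Pierce calculus recalled in Section \ref{pierce}; no new machinery beyond Lemma \ref{ti} and Corollary \ref{aaa} should be needed.
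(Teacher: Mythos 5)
Your proof is correct and follows essentially the same route as the paper: Corollary \ref{aaa} restricts the frame coordinates of the normal, simplicity provides a nonzero Pierce space $V_{pq}$, and the pair of (scaled) idempotents $\m c_p+\m c_q\pm\m u$ violates the inequality of Lemma \ref{ti}. The only difference is organizational: the paper splits into two cases (a single nonzero coordinate, refuted via $V_{12}$, and two opposite-sign coordinates, refuted via $V_{13}$), whereas you merge both by pairing an index $p$ with $a_p\neq 0$ against a neutral index $q$ with $a_q=0$, whose existence is exactly what $r\geq 3$ guarantees --- a slightly cleaner packaging of the same argument.
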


\begin{proof}
Assume the contrary: $H$ is an l-l hyperplane through $0$ in $V$ with the
unit normal $\m a$.

Consider a Jordan frame $\{\m c_1,...,\m c_r\}$ attached to $\m a$.

The set
$$H_{r-1}=H\cap V_r$$
is obviously a hyperplane through $0$ in $V_r$.

Since by hypothesis $H\sqsubset_Q V$, by Corollary \ref{fooaltkov},  $H_{r-1}\sqsubset_{Q_r} V_r,$ 
where $Q_r=Q\cap V_r$.

If $\m a=(a_1,...,a_r)$ is the representation of $\m a$
in the reference system engendered by the Jordan frame,
then using Corollary \ref{aaa},
the l-l property of $H_{r-1}$ in $(V_r,Q_r)$ implies
that one of the following cases must hold:
\begin{description}
	\item {\bf Case 1}. For some $i$ $a_i=1$ and $a_j=0$ for $j\not=i$.
	\item {\bf Case 2}. There are only two nonzero coordinates, say $a_i$ and $a_j$ with $a_ia_j<0$.
\end{description}

Suppose that $i=1,\,j=2$.

Since $V$ is simple, $V_{12}\not= \{0\}$ (by Proposition IV.2.3 \cite{FarautKoranyi1994}), hence we
can take $\m x\in V_{12}$
with $\|\m x\|^2=2$. Then, by Exercise IV. 7 in \cite{FarautKoranyi1994}, 
we have that
\begin{equation}\label{xn}
 \m u=\frac{1}{2}\m c_1+\frac{1}{2}\m c_2+\frac{1}{2}\m x,\qquad \textrm{and} \qquad \m v=\frac{1}{2}\m c_1+\frac{1}{2}\m c_2-\frac{1}{2}\m x
\end{equation}
are idempotent elements, hence $\m u,\m v\in Q.$
We further have 
$$\m u\m v= \lf(\frac{1}{2}\m c_1+\frac{1}{2}\m c_2\rg)^2-\frac{1}{4}\m x^2,$$
whereby, by using Proposition IV.1.4 in \cite{FarautKoranyi1994}, we have \[\m x^2=\frac12\|\m x\|^2(\m c_1+\m c_2)=\m c_1+\m c_2,\]
and after raising to the second power and substitution
$$\m u\m v= \frac{1}{4}\m c_1+\frac{1}{4}\m c_2-\frac{1}{4}(\m c_1+\m c_2)=0.$$
Hence

$$\lng\m  u,\m v\rng=0.$$

Since $H_{r-1}$ is l-l, we have by Lemma \ref{ti}, 

$$0= \lng\m  u,\m v\rng \geq \lng \m a,\m u\rng \lng \m a,\m v\rng.$$

If $a_1=1, $ and $a_j=0$ for $j\not= 1$, the above relation becomes $0\geq \frac{1}{4}\|\m c_1\|^4,$ which is impossible.

Assume $a_1a_2<0$ and $a_j=0$ for $j>2$.
%Then
%$$0\geq \lng\m  a,\m u\rng \lng\m  a,\m v\rng=(a_1+a_2)(a_1+a_2)=(a_1+a_2)^2,$$
%whereby supposing $a_1>0$ it follows
%$$ a_1=\frac{\sqrt 2}{2},\qquad a_2=-\frac{\sqrt 2}{2}.$$

Take now
\begin{equation}\label{yn}
	%\lf\{
	\begin{array}{l}
		\m w=\ds\frac{1}{2}\m c_1+\frac{1}{2}\m c_3+\frac{1}{2}\m y\\\\
		\m z=\ds\frac{1}{2}\m c_1+\frac{1}{2}\m c_3-\frac{1}{2}\m y
	\end{array}
	%\rg.
\end{equation}
with $\m y\in V_{13},\;\|\m y\|^2=2.$
Then, $\m w,\m z\in Q$ (similarly to $\m u,\m v\in Q$) and, by using the mutual orthogonality of the elements
$\m c_1,\,\m c_2,\,\m c_3,\,\m  y$ and Lemma \ref{ti}, it follows that
$$0=\lng\m  w,\m z\rng\geq\lng\m  a,\m w\rng\lng\m  a,\m z\rng =$$
$$\lng a_1\m c_1+a_2\m c_2,\frac{1}{2}\m c_1+\frac{1}{2}\m c_3+\frac{1}{2}\m y\rng\lng a_1\m c_1+a_2\m c_2,\frac{1}{2}\m c_1+\frac{1}{2}\m c_3-\frac{1}{2}\m y\rng
=\frac{1}{4}a_1^2\|\m c_1\|^2,$$
which is a contradiction.

\end{proof}

This theorem conferred with Theorem \ref{FOOO} and Lemma 3 in \cite{NemethNemeth2012a} yields the

\begin{corollary}
In the ordered Euclidean Jordan algebra $(V,Q)$ of rank at least $3$
there are no proper closed convex lattice-like set with nonempty interior.
In particular, for $n\geq 3$ the ordered space $(\sm,\smp)$
contains no proper, closed, convex lattice-like set with nonempty interior. 
\end{corollary}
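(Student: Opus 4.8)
The plan is to argue by contradiction and reduce the existence of such a set to the existence of a lattice-like hyperplane, which is exactly what Theorem \ref{nincs} excludes. Suppose $C\subset V$ were a proper, closed, convex, lattice-like set with nonempty interior. Since $C$ has nonempty interior and is lattice-like, Theorem \ref{FOOO} applies and exhibits $C$ as an intersection
\[
C=\bigcap_{i\in\n} H_-(\m u_i,\m a_i)
\]
of closed halfspaces whose bounding hyperplanes $H(\m u_i,\m a_i)$ are tangent to $C$ and are themselves lattice-like. Because $C$ is proper, this intersection is not all of $V$, so the family is nonempty and I can extract at least one lattice-like tangent hyperplane $H(\m u,\m a)$.

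Next I would translate this hyperplane back to the origin. By the translation invariance of the l-l property (Lemma 3 in \cite{NemethNemeth2012a}), the parallel hyperplane $H(\m 0,\m a)$ through the origin with the same normal $\m a$ is again lattice-like; equivalently, this follows from Lemma \ref{ti}, since the $K$-isotonicity of the projection onto $H(\m u,\m a)$ is equivalent to that of the projection onto $H(\m 0,\m a)$. Thus $V$ would contain a lattice-like hyperplane through the origin, directly contradicting Theorem \ref{nincs}, which asserts that a simple Euclidean Jordan algebra of rank $r\geq 3$ contains no lattice-like hyperplanes. This contradiction proves the first, general assertion.

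For the concluding special case I would only invoke the standard identification of $(\sm,\smp)$ as a Euclidean Jordan algebra: the symmetric $n\times n$ matrices, equipped with the symmetrized product $\frac12(XY+YX)$, form a simple Euclidean Jordan algebra whose cone of squares is precisely $\smp$ and whose rank equals $n$. For $n\geq 3$ this is a simple algebra of rank at least $3$, so the general assertion applies verbatim to $(\sm,\smp)$.

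I expect the only delicate point to be the correct application of Theorem \ref{FOOO}: one must be sure that a proper set with nonempty interior genuinely forces the representing family of tangent halfspaces to be nonempty, and that Theorem \ref{FOOO} guarantees each such tangent hyperplane is lattice-like rather than merely supporting. Once this is secured, the remaining chain---translate to the origin, then cite Theorem \ref{nincs}---is immediate and requires no computation.
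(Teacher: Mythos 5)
Your proposal is correct and follows exactly the route the paper intends: the paper proves this corollary by ``conferring'' Theorem~\ref{nincs} with Theorem~\ref{FOOO} and the translation invariance of the l-l property (Lemma~3 of \cite{NemethNemeth2012a}), which is precisely your chain of extracting a lattice-like tangent hyperplane, translating it to the origin, and contradicting Theorem~\ref{nincs}. The only caveat, inherited from the paper's own (slightly loose) statement, is that $V$ must be \emph{simple} for Theorem~\ref{nincs} to apply --- a hypothesis your argument uses implicitly and which holds for $(\sm,\smp)$, as you verify.
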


\section{The case of the simple Euclidean Jordan algebras of rank 2}

A simple Euclidean Jordan algebra of rank 2 is isomorphic to an
algebra associated with a positive definite bilinear form
(Corollary IV.1.5 \cite{FarautKoranyi1994}). This
is in fact a Jordan algebra associated with the Lorentz cone. %(??!!)
Hence the problem of the existence of l-l hyperplanes
in this case is answered positively in \cite{NemethNemeth2012a}
and \cite{NemethNemeth2012b}. In this section we use
the formalism developed in the preceding sections to this case too.
 \begin{lemma}\label{ketdim}
Suppose that $\m a$ is the unit normal to a lattice-like hyperplane $H$
through $0$ in the simple Euclidean Jordan algebra $V$ of
rank 2. Let $\{\m c_1,\m c_2\}$ be the Jordan frame attached to $\m a$ and $\m a=a_1\m c_1+a_2\m c_2$. Then, supposing $a_1>0$, we obtain
\begin{equation}\label{keta2}
 \m a= \frac{\sqrt 2}{2} \m c_1 -\frac{\sqrt 2}{2} \m c_2.
\end{equation}
\end{lemma}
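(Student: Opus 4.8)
The plan is to extract the exact value of $\m a$ from the lattice-like property of $H$ by feeding a single pair of mutually annihilating idempotents into the trace inequality of Lemma \ref{ti}. Since $V$ is simple of rank $2$, the Pierce space $V_{12}$ is nonzero (Proposition IV.2.3 in \cite{FarautKoranyi1994}), so I first choose $\m x\in V_{12}$ with $\|\m x\|^2=2$ and form
\[\m u=\frac{1}{2}\m c_1+\frac{1}{2}\m c_2+\frac{1}{2}\m x,\qquad \m v=\frac{1}{2}\m c_1+\frac{1}{2}\m c_2-\frac{1}{2}\m x,\]
exactly as in \eqref{xn}. The computation carried out in the proof of Theorem \ref{nincs} (using $\m x^2=\m c_1+\m c_2$) gives $\m u\m v=0$; hence $\m u$ and $\m v$ are idempotents lying in $Q$ and $\lng\m u,\m v\rng=0$.

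Next I would use that $H$ is lattice-like, so $P_H$ is $Q$-isotone and Lemma \ref{ti}(iii) applies to $\m u,\m v\in Q$, yielding $\lng\m u,\m v\rng\ge\lng\m a,\m u\rng\lng\m a,\m v\rng$. Recalling that in a Jordan frame distinct primitive idempotents are orthogonal with $\|\m c_i\|^2=1$, and that $\m x\in V_{12}$ is orthogonal to $\m c_1$ and $\m c_2$, both factors on the right reduce to $\lng\m a,\m u\rng=\lng\m a,\m v\rng=\frac{1}{2}(a_1+a_2)$. The inequality therefore becomes
\[0=\lng\m u,\m v\rng\ge\frac{1}{4}(a_1+a_2)^2,\]
which forces $a_2=-a_1$.

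Finally I would impose the unit-normal condition: since $\m c_1,\m c_2$ are orthonormal, $\|\m a\|^2=a_1^2+a_2^2=1$, and together with $a_2=-a_1$ this gives $a_1^2=\frac{1}{2}$. The assumption $a_1>0$ then selects $a_1=\frac{\sqrt 2}{2}$ and $a_2=-\frac{\sqrt 2}{2}$, which is precisely \eqref{keta2}.

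I do not expect a genuine obstacle here, since the computation is a rank-$2$ specialization of the argument already used for rank $\ge 3$. The only point worth flagging is that in rank $2$ the identity $\m c_1+\m c_2=\m e$ makes $\lng\m a,\m u\rng$ and $\lng\m a,\m v\rng$ equal, so the single pair $(\m u,\m v)$ now determines $\m a$ exactly rather than merely producing a contradiction; in particular the same inequality simultaneously rules out the case in which $\m a$ has a single nonzero coordinate, since that would give $a_1+a_2=1\ne0$.
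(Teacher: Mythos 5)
Your proof is correct and is essentially identical to the paper's own argument: the same idempotent pair $\m u,\m v$ from \eqref{xn}, the same application of Lemma \ref{ti} yielding $0=\lng\m u,\m v\rng\ge\lng\m a,\m u\rng\lng\m a,\m v\rng=\frac{1}{4}(a_1+a_2)^2$, hence $a_2=-a_1$, followed by normalization. You merely make explicit two steps the paper leaves implicit (the unit-norm computation and the remark that the case of a single nonzero coordinate is excluded by the same inequality), which is fine.
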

\begin{proof}
Take $\m u$ and $\m v$ as in the formula (\ref{xn}). Then, $\m u,\m v\in Q$
and using Lemma \ref{ti} we obtain
\begin{gather*}
	0=\lng\m  u,\m v\rng\geq\lng\m  a,\m u\rng\lng\m  a,\m v\rng\\
	=\lng a_1\m c_1+a_2\m c_2,\frac{1}{2}\m c_1+\frac{1}{2}\m c_2+\frac{1}{2}\m x\rng 
	\lng a_1\m c_1+a_2\m c_2,\frac{1}{2}\m c_1+\frac{1}{2}\m c_2-\frac{1}{2}\m x\rng
	=\frac{1}{4}(a_1+a_2)^2,
\end{gather*}
whereby our assumption follows.
\end{proof}

\begin{theorem}\label{ketdim1}
Let $V$ be a simple Euclidean Jordan algebra of rank $2$ and $H$ be a hyperplane through $0$ with unit normal $\m a$ in $V$. Then, $H$
is lattice-like if and only if 
$\m a=\sqrt 2/2 \m c_1 -\sqrt 2/2 \m c_2$ in its Jordan frame representation.
In this case $H$ is a subalgebra. 
\end{theorem}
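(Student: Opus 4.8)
The plan is to establish the equivalence in two steps and to prove the final assertion that $H$ is a subalgebra first, since it delivers the substantive implication almost for free.

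For the ``only if'' direction I would invoke Lemma~\ref{ketdim} directly. A hyperplane through $\m 0$ determines its unit normal only up to sign, so after possibly replacing $\m a$ by $-\m a$ (equivalently, interchanging the two idempotents of the attached Jordan frame) I may assume $a_1>0$. Lemma~\ref{ketdim} then forces $\m a=\f{\sqrt 2}{2}\m c_1-\f{\sqrt 2}{2}\m c_2$. I would also remark that this argument automatically rules out the possibility $a_1=0$ or $a_2=0$: the conditions $a_1+a_2=0$ and $a_1^2+a_2^2=1$ leave no room for a vanishing coordinate, so in rank $2$ the analogue of Case~1 never produces a lattice-like hyperplane.

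For the converse, and simultaneously for the subalgebra claim, I would first identify $H$ explicitly. Using the orthogonality of the Pierce decomposition one has $\lng\m a,\m x\rng=\f{\sqrt 2}{2}(x_1-x_2)$, so that $H=\{\m x:\,x_1=x_2\}$; writing $\m x=x_1\m c_1+x_2\m c_2+\m x_{12}$ and using $\m c_1+\m c_2=\m e$ this becomes $H=\R\m e\oplus V_{12}$. It then remains to check that $H$ is closed under the Jordan product. Expanding the product of $\m x=t\m e+\m x_{12}$ and $\m y=s\m e+\m y_{12}$, the only term not visibly lying in $\R\m e\oplus V_{12}$ is $\m x_{12}\m y_{12}$, which a priori lies in $V_{11}+V_{22}=\R\m c_1+\R\m c_2$. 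The crucial computation is to polarize the identity $\m z^2=\f12\|\m z\|^2\m e$, valid for $\m z\in V_{12}$ in rank $2$ (Proposition IV.1.4 in \cite{FarautKoranyi1994}, since here $\m c_1+\m c_2=\m e$), which yields
\[
\m x_{12}\m y_{12}=\f12\lng\m x_{12},\m y_{12}\rng\m e\in\R\m e\subset H.
\]
Hence $\m x\m y\in H$ and $H$ is a Jordan subalgebra.

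With the subalgebra structure in hand, the ``if'' direction is immediate: by Theorem~\ref{mell} every Jordan subalgebra is lattice-like, so $H$ is lattice-like. Alternatively one could verify the inequality of Lemma~\ref{ti} head-on, using that $\m x\in Q$ is equivalent to $\tr\m x=x_1+x_2\ge0$ together with $x_1x_2-\f12\|\m x_{12}\|^2=\det\m x\ge0$, and then combining the arithmetic-geometric mean inequality with Cauchy--Schwarz; but routing the proof through the subalgebra structure is cleaner and reuses Theorem~\ref{mell}. The step that requires genuine care, and the only place where the rank-$2$ hypothesis is essential, is the product computation above: it is precisely the fact that $\m x_{12}\m y_{12}$ collapses to a scalar multiple of $\m e$ --- a phenomenon that fails once the rank exceeds $2$ --- that makes the lattice-like hyperplane $H$ a subalgebra.
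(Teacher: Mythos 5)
Your proposal is correct and follows essentially the same route as the paper: necessity via Lemma~\ref{ketdim} (the paper likewise leaves the sign normalization implicit), and sufficiency by identifying $\ker\m a=\R\m e\oplus V_{12}$ and showing it is a subalgebra through the polarization identity $\m x\m y=\f14\left((\m x+\m y)^2-(\m x-\m y)^2\right)$ combined with Proposition IV.1.4 of \cite{FarautKoranyi1994}, then invoking Theorem~\ref{mell}. Your explicit constant $\m x_{12}\m y_{12}=\f12\lng\m x_{12},\m y_{12}\rng\m e$ and the remark ruling out a vanishing coordinate of $\m a$ are minor refinements of the same argument.
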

\begin{proof}
Suppose that $H=\ker\m  a,\;\|\m a\|=1$ is l-l,
and that the Jordan frame attached to $\m a$ is $\{\m c_1,\m c_2\}.$

Then, by Lemma \ref{ketdim}, it follows that $\m a$ is
of form (\ref{keta2}).

Suppose that the Jordan frame representation of $\m a$ is of form (\ref{keta2}). 
Then, equations \eqref{Oplus} and \eqref{Oplus1} imply that
$$ \ker\m  a=\{t(\m c_1+\m c_2)+\m x=t\m e+\m x:\,t\in \R, \,\m x\in V_{12}\}.$$
Then, for two arbitrary
elements $\m u,\m v\in \ker\m  a$, we have the representations:
$$\m u =t_1\m e+\m x;\textrm{ }\m v =t_2\m e+\m y;\textrm{ }\m x,\m y\in V_{12};\textrm{ }t_i\in \R,\textrm{ }i=1,2.$$
Then,
$$\m u\m v=t_1t_2\m e+t_1\m y+t_2\m x+ \m x\m y.$$
Since $\m x\m y=(1/4)((\m x+\m y)^2-(\m x-\m y)^2)$,
by using Proposition IV.1.4 in \cite{FarautKoranyi1994}, we conclude
that $\m x\m y=q(\m c_1+\m c_2)=q\m e$
with $q\in \R.$
Hence
$$\m u\m v=(t_1t_2+q)\m e+t_1\m y+t_2\m x\in \ker\m  a.$$
This shows that $H=\ker\m  a$ is a subalgebra, and hence an l-l set.
\end{proof}

\begin{remark}\label{ketketket}

With the notations in the above proof we have that $\spa \{\m c_1,\,\m c_2\}$ is a subalgebra of dimension $2$ in $V$.

Similarly to Remark \ref{nemalgebra}, it follows that there exist
l-l subspaces of dimension 1 in $\spa \{\m c_1,\,\m c_2\}$ which are not subalgebras.
\end{remark} 

Collating Theorem \ref{ketdim1} and Theorem \ref{FOOO} it follows the result:

\begin{corollary}\label{konvlor}
The closed convex set with nonempty
interior $M\subset V$  is a lattice-like set if and only if it is of the form:
\begin{equation*}
M=\bigcap_{i\in \n} H_-(\m u_i,\m a_i),
\end{equation*}
with the $\m a_i$ normal unit vectors represented in their Jordan
frame $\m c^i_1, \,\m c^i_2 $ by
\begin{equation}\label{keta}
	\m a_i=\varepsilon_i\left(\frac{\sqrt 2}{2}\m c^i_1-\frac{\sqrt 2}{2}\m c^i_2\right),\textrm{ }
	\varepsilon_i=1\textrm{ or }-1.
\end{equation}
\end{corollary}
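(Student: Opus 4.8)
The plan is to combine Theorem \ref{FOOO} with the complete description of lattice-like hyperplanes furnished by Theorem \ref{ketdim1}. First I would invoke Theorem \ref{FOOO}: a closed convex set $M$ with nonempty interior in $(V,Q)$ is lattice-like if and only if it can be written as an intersection $M=\bigcap_{i\in\n}H_-(\m u_i,\m a_i)$ of closed halfspaces whose bounding hyperplanes $H(\m u_i,\m a_i)$ are tangent to $M$ and are themselves lattice-like. This already fixes the geometric form of $M$; what remains is to pin down exactly which unit normals $\m a_i$ are admissible.

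The key reduction is to pass from each translated tangent hyperplane $H(\m u_i,\m a_i)$ to the parallel hyperplane through the origin carrying the same normal. By the translation invariance of the lattice-like property (Lemma 3 in \cite{NemethNemeth2012a}), equivalently by the equivalence of (i) and (ii) in Lemma \ref{ti} together with Theorem \ref{ISOINV}, the hyperplane $H(\m u_i,\m a_i)$ is lattice-like if and only if the hyperplane $H(\m 0,\m a_i)$ through the origin with the same unit normal is lattice-like. Hence the admissibility of each $\m a_i$ depends only on its direction, not on the offset $\m u_i$.

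Next I would apply Theorem \ref{ketdim1} to $H(\m 0,\m a_i)$. Since $V$ is a simple Euclidean Jordan algebra of rank $2$, that theorem says $H(\m 0,\m a_i)$ is lattice-like precisely when, in the Jordan frame $\{\m c^i_1,\m c^i_2\}$ attached to $\m a_i$, one has $\m a_i=\frac{\sqrt2}{2}\m c^i_1-\frac{\sqrt2}{2}\m c^i_2$. A tangent hyperplane determines its unit normal only up to sign, and the lattice-like criterion (iii) of Lemma \ref{ti} is itself insensitive to replacing $\m a_i$ by $-\m a_i$, since $\lng\m a_i,\m x\rng\lng\m a_i,\m y\rng$ is unchanged. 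Because interchanging the roles of $\m c^i_1$ and $\m c^i_2$ merely flips the sign of the above expression, the admissible normals are exactly those of the form (\ref{keta}), namely $\m a_i=\varepsilon_i\left(\frac{\sqrt2}{2}\m c^i_1-\frac{\sqrt2}{2}\m c^i_2\right)$ with $\varepsilon_i\in\{1,-1\}$. Conversely, any family of halfspaces with normals of this form has, by the same two theorems, all its tangent hyperplanes lattice-like, so the intersection is lattice-like by Theorem \ref{FOOO}, giving the stated equivalence.

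The substantive work has already been discharged in Theorems \ref{FOOO} and \ref{ketdim1}; the only point requiring genuine care is the bookkeeping of the sign $\varepsilon_i$—verifying that the ``up to sign'' ambiguity in the unit normal of a tangent hyperplane is captured exactly by allowing $\varepsilon_i=\pm1$, and that no spurious normalization (such as the assumption $a_1>0$ used in Lemma \ref{ketdim}) is silently imposed once one ranges over all tangent hyperplanes of $M$.
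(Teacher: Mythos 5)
Your proposal is correct and takes essentially the same route as the paper: the paper's entire proof is the single sentence that the result follows by ``collating Theorem \ref{ketdim1} and Theorem \ref{FOOO}.'' Your write-up simply supplies the details the paper leaves implicit --- the reduction of each tangent hyperplane $H(\m u_i,\m a_i)$ to the parallel hyperplane through the origin via Lemma \ref{ti} and Theorem \ref{ISOINV}, and the bookkeeping of the sign $\varepsilon_i$ coming from the orientation of the outward normals.
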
 

\begin{example}\label{lorentz}
Write the elements of $\R^{m+1}$ in the form
$(\m x,x_{m+1})$ with $\m x\in \R^m$ and $x_{m+1}\in \R.$
The Jordan product in $\R^{m+1}$ is defined by
$$(\m x,x_{m+1})\circ (\m y,y_{m+1})=(y_{m+1}\m x+x_{m+1}\m y, \lng\m  x,\m y\rng+x_{m+1}y_{m+1}),$$ 
where $\lng \m x,\m y\rng$ is the usual scalar product in $\R^m.$
The space $\R^{m+1}$ equipped with the usual scalar product and the
operation $\circ$ just defined becomes an Euclidean Jordan algebra of rank 2, denoted by 
$\mathcal L^{m+1}$ \cite{GowdaSznajderTao2004}, with the cone of squares $Q=\mathcal L^{m+1}_+$, the Lorentz cone
defined by (\ref{lorentzcone}).
 
The unit element in $\mathcal L^{m+1}$ is $(\m 0,1)$, where $\m 0$ is the zero
vector in $\R^m$.

The Jordan frame attached to $(\m x,x_{m+1})\in \mathcal L^{m+1}$ with $\m x\not=0$ is
$$\m c_1=\frac{1}{2}\left(\frac{\m x}{\|\m x\|},1\right),\quad \m c_2=
\frac{1}{2}\left(-\frac{\m x}{\|\m x\|},1\right).$$
The unit normal $\m a$ from Lemma \ref{ketdim} will be then parallel with
$(\m b,0)$ with some $\m b\in \R^m,\;\m b\not=\m 0.$
This means, that the hyperplanes $H(\m u_i,\m a_i)$ in Corollary \ref{konvlor}
are parallel with the $m+1$-th axis, and the closed convex set in
the corollary is in fact of form
$$M=C\times \R,$$
with $C$ closed convex set with nonempty interior in $\R^m.$

This is exactly the result in Example 1 of \cite{NemethNemeth2012b}.
\end{example}

\section{The general case}

For a general Euclidean Jordan algebra $V$, gathering the results
of Proposition III.4.4, Proposition III.4.5, and Theorem V.3.7,
of \cite{FarautKoranyi1994}, in Theorem 5 of \cite{GowdaSznajderTao2004} the following result is
stated:

\begin{theorem}\label{split}
Any Euclidean Jordan algebra $V$ is, in unique way, a direct sum 
\begin{equation}\label{dirossz}
V=\oplus_{i=1}^k V_i
\end{equation}
of simple Euclidean Jordan algebras $V_i,\,i=1,...,k$.
Moreover the cone of squares $Q$ in $V$ is, in a unique way,
a direct sum
\begin{equation}\label{kupdirossz}
Q=\oplus_{i=1}^k Q_k
\end{equation}
of the cones of squares $Q_i$ in $V_i,\,i=1,...,k$.
\end{theorem}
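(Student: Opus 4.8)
The plan is to build the decomposition by repeatedly splitting off ideals, using the single structural feature that distinguishes a \emph{Euclidean} Jordan algebra: its inner product is associative, $\lng \m x\m y,\m z\rng=\lng \m y,\m x\m z\rng$ for all $\m x,\m y,\m z\in V$. Throughout, an \emph{ideal} is a subspace $I$ with $VI\subseteq I$, and (in the paper's sense) $V$ is simple precisely when its only ideals are $\{\m 0\}$ and $V$.

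The first and decisive step is that the orthogonal complement of an ideal is again an ideal. Indeed, if $I$ is an ideal and $\m z\in I^{\perp}$, then for every $\m x\in V$ and every $\m y\in I$ we have $\lng \m x\m z,\m y\rng=\lng \m z,\m x\m y\rng=0$, since $\m x\m y\in I$; hence $\m x\m z\in I^{\perp}$. Thus a proper nonzero ideal $I$ yields an \emph{orthogonal} splitting $V=I\oplus I^{\perp}$ into two ideals, each of which is a Jordan subalgebra. Inducting on $\dme V$ produces $V=\oplus_{i=1}^{k}V_i$ with the $V_i$ orthogonal ideals that admit no further proper splitting, i.e.\ simple. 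Each $V_i$ inherits the associative inner product and is therefore a Euclidean Jordan algebra; decomposing the unit as $\m e=\m e_1+\dots+\m e_k$ exhibits $\m e_i$ as the unit of $V_i$, the $\m e_i$ being mutually orthogonal idempotents.

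For the cone I would use the orthogonality of the summands: $V_iV_j\subseteq V_i\cap V_j=\{\m 0\}$ for $i\ne j$, because the product of an element of one ideal with anything stays inside that ideal. Hence every $\m x=\sum_i\m x_i$ (with $\m x_i\in V_i$) satisfies $\m x^2=\sum_i\m x_i^2$, so $Q=\{\m x^2:\m x\in V\}=Q_1+\dots+Q_k$, where $Q_i=\{\m x_i^2:\m x_i\in V_i\}$ is the cone of squares of $V_i$; since the $V_i$ are in direct sum this is the asserted decomposition $Q=\oplus_{i=1}^{k}Q_i$. One checks moreover that $Q_i=Q\cap V_i$, because $\m x_j^2=\m 0$ forces $\m x_j=\m 0$ (as $\lng\m x_j^2,\m e\rng=\|\m x_j\|^2$).

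The part I expect to be the main obstacle is \emph{uniqueness}, which I would settle by showing the simple summands are intrinsic: $\{V_1,\dots,V_k\}$ is exactly the family of minimal nonzero ideals of $V$. Each $V_i$ is minimal, since any ideal $J$ of $V$ with $J\subseteq V_i$ satisfies $V_iJ\subseteq VJ\subseteq J$, hence is an ideal of the simple algebra $V_i$ and so equals $\{\m 0\}$ or $V_i$. Conversely, if $M$ is any minimal nonzero ideal, then $M=M\m e=\sum_i M\m e_i$ with $M\m e_i\subseteq M\cap V_i$; here $M\cap V_i$ is an ideal lying inside the minimal ideal $M$, so it is $\{\m 0\}$ or $M$, and it cannot vanish for every $i$ (else $M=\{\m 0\}$). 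Thus $M\subseteq V_i$ for some $i$, and then $M$ is a nonzero ideal of the simple algebra $V_i$, forcing $M=V_i$. Since the minimal ideals form a canonical family, $V=\oplus_i V_i$ is unique up to reordering, and the uniqueness of $Q=\oplus_i Q_i$ is immediate from $Q_i=Q\cap V_i$.
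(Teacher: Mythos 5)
Your proof is correct. Note, however, that the paper does not actually prove this statement: it simply quotes the decomposition theorem from Faraut--Kor\'anyi (Propositions III.4.4, III.4.5 and Theorem V.3.7) as restated in Theorem 5 of Gowda--Sznajder--Tao, so there is no in-paper argument to compare against. What you have written is a self-contained version of the standard structure-theory proof, and all the steps check out: the associativity $\lng \m x\m y,\m z\rng=\lng \m y,\m x\m z\rng$ of the (positive definite) inner product does give that $I^{\perp}$ is an ideal whenever $I$ is; ideals of an ideal summand are ideals of $V$ (since $I^{\perp}I=\{\m 0\}$), so the induction terminates in simple ideals; the identity $V_iV_j=\{\m 0\}$ for $i\ne j$ yields $\m x^2=\sum_i\m x_i^2$ and hence $Q=\oplus_i Q_i$ with $Q_i=Q\cap V_i$ (your observation $\lng\m x_j^2,\m e\rng=\|\m x_j\|^2$ correctly handles the degenerate components); and the identification of the summands with the minimal nonzero ideals settles uniqueness, since that family is defined without reference to any particular decomposition. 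The only thing your write-up buys beyond the paper is self-containedness; conversely, the citation route is what the paper needs, since the theorem is used only as a black box to reduce the classification of lattice-like sets to the simple summands.
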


(Here the direct sum, (by a difference to that in the
Pierce decomposition), means Jordan-algebraic
and hence also orthogonal direct sum.)

Let $C\subset V$ a closed convex set.
From the results in Theorem \ref{split} it follows easily  (using the notations in the
theorem), that
\begin{equation}\label{projossz}
P_C = \sum_{i=1}^k P_{C_i},
\end{equation}
with $C_i=C\cap V_i,\;i=1,...,k$.

Collating these results with Corollary \ref{fooaltkov}, we have the following: 

\begin{corollary}\label{ossz}
With the notations in Theorem \ref{split},
for the subspace $M\Subset V$ we have the
equivalence:
\begin{equation}\label{llossz}
M\sqsubset_Q V \Leftrightarrow M\cap V_i\sqsubset_{Q_i} V_i,\;i=1,...,k.
\end{equation}
For the closed convex set $C$ the projection $P_C$
is $Q$-isotone if and only if $P_{C\cap V_i}$ is $Q_i$-isotone in $(V_i,Q_i),\;i=1,...k.$
\end{corollary}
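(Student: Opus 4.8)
The plan is to deduce both assertions of the corollary from a single principle: along the orthogonal direct sum $V=\oplus_{i=1}^k V_i$ of Theorem \ref{split}, both the order $\le_Q$ and the metric projection onto a closed convex $C$ split into their $V_i$-components, after which $Q$-isotonicity of $P_C$ becomes equivalent to the conjunction of the $Q_i$-isotonicities of the $P_{C_i}$. First I would reduce the subspace statement to the projection statement. By Theorem \ref{ISOINV}, $M\sqsubset_Q V$ is the same as $P_M$ being $Q$-isotone, and likewise $M\cap V_i\sqsubset_{Q_i}V_i$ is the same as $P_{M\cap V_i}$ being $Q_i$-isotone; hence the equivalence \eqref{llossz} is precisely the closed-convex statement specialized to $C=M$ and $C_i=M\cap V_i$. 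So it suffices to prove the isotonicity equivalence for an arbitrary closed convex $C$.

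Next I would record the two splittings. Writing $\m x_i=P_{V_i}\m x$ for the $V_i$-component of $\m x$, the orthogonality of the summands together with the cone decomposition $Q=\oplus_{i=1}^k Q_i$ from \eqref{kupdirossz} gives at once that $\m x\le_Q\m y$ holds if and only if $\m x_i\le_{Q_i}\m y_i$ for every $i$, since $\m y-\m x\in Q$ exactly when each component $\m y_i-\m x_i$ lies in $Q_i$. For the projection I would invoke \eqref{projossz}: because $C_i=C\cap V_i\subset V_i$ and the $V_i$ are mutually orthogonal, projecting onto $C_i$ ignores the components outside $V_i$, so $P_{C_i}\m x=P_{C_i}(\m x_i)\in V_i$ and therefore $P_{V_i}(P_C\m x)=P_{C_i}(\m x_i)$. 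Thus the $V_i$-component of $P_C\m x$ depends only on $\m x_i$ and equals the projection of $\m x_i$ onto $C_i$ carried out inside $(V_i,Q_i)$.

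With both splittings in hand the equivalence is immediate in one direction and a short lifting argument in the other. If each $P_{C_i}$ is $Q_i$-isotone and $\m x\le_Q\m y$, then $\m x_i\le_{Q_i}\m y_i$ yields $P_{C_i}(\m x_i)\le_{Q_i}P_{C_i}(\m y_i)$, i.e. $P_{V_i}(P_C\m x)\le_{Q_i}P_{V_i}(P_C\m y)$ for all $i$, and by the order splitting this means $P_C\m x\le_Q P_C\m y$. Conversely, to show $P_{C_j}$ is $Q_j$-isotone for a fixed $j$, I would take $\m s\le_{Q_j}\m t$ in $V_j$ and lift them to $\m x,\m y\in V$ agreeing on every summand $i\ne j$ (for instance with vanishing components there) and with $\m x_j=\m s$, $\m y_j=\m t$; then $\m x\le_Q\m y$, hence $P_C\m x\le_Q P_C\m y$, and projecting onto $V_j$ gives $P_{C_j}(\m s)\le_{Q_j}P_{C_j}(\m t)$. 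This proves the closed-convex statement, and with the reduction above, the subspace equivalence \eqref{llossz} as well; alternatively one may apply Corollary \ref{fooaltkov} to each pair $M\cap V_i\Subset V_i$ (noting $V_i\sqsubset_Q V$ since $V_i$ is a subalgebra and $Q\cap V_i=Q_i$) to pass between $\sqsubset_{Q_i}V_i$ and $\sqsubset_Q V$.

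The step I expect to carry the real weight is the projection splitting \eqref{projossz} together with the factorization $P_{C_i}=P_{C_i}\circ P_{V_i}$; everything after it is coordinatewise bookkeeping. I would therefore spend the care there, deriving the factorization from the Pythagorean identity $\|\m x-\m c\|^2=\|\m x_i-\m c\|^2+\|\m x-\m x_i\|^2$ valid for $\m c\in C_i\subset V_i$, which forces the minimizer over $C_i$ to depend only on $\m x_i$, and then using the mutual orthogonality of the summands so that the component sum $P_C=\sum_i P_{C_i}$ is consistent with \eqref{projossz}. The only remaining subtlety is notational discipline: keeping the two order relations $\le_Q$ and $\le_{Q_i}$, and the two ambient spaces $V$ and $V_i$, carefully distinguished throughout the lifting argument.
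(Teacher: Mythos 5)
Your route is the same one the paper takes --- the paper's entire proof is the one--line remark that \eqref{projossz} together with Corollary \ref{fooaltkov} yields the statement --- and your reduction of the subspace claim to the isotonicity claim via Theorem \ref{ISOINV}, as well as the componentwise splitting of $\leq_Q$ coming from \eqref{kupdirossz}, are correct. The gap sits exactly at the step you yourself single out as carrying the real weight. The Pythagorean identity does give $P_{C_i}=P_{C_i}\circ P_{V_i}$, but mutual orthogonality of the summands does \emph{not} give $P_C=\sum_i P_{C_i}$ for $C_i=C\cap V_i$, nor the factorization $P_{V_i}(P_C\m x)=P_{C_i}(\m x_i)$ through which your whole argument runs. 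Indeed $\sum_i P_{C_i}\m x$ need not even lie in $C$: take $V=\R\m e_1\oplus\R\m e_2$ (a sum of two rank-one simple algebras, $Q=\R^2_+$) and $C$ the diagonal line $\{(t,t):t\in\R\}$; then $C_1=C_2=\{\m 0\}$, so $\sum_i P_{C_i}=0$ while $P_C\neq 0$. The identity \eqref{projossz} is valid only under the extra hypothesis $C=C_1\oplus\dots\oplus C_k$ (equivalently $P_{V_i}C\subset C$ for all $i$); the paper asserts it without this hypothesis and without proof, and you have inherited that assertion rather than derived it.

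The gap is not merely expository, because without that hypothesis the ``if'' directions of the corollary genuinely fail. With $V$, $Q$ as above let $M=\spa\{\m e_1-\m e_2\}$. Then $M\cap V_i=\{\m 0\}$ is trivially lattice-like in $(V_i,Q_i)$ for $i=1,2$, yet $P_M$ is not $Q$-isotone: $\m 0\leq_Q\m e_2$ but $P_M\m e_2=\left(-\tfrac12,\tfrac12\right)\notin Q$, so $M\not\sqsubset_Q V$. Your lifting argument for the converse uses $P_{V_j}(P_C\m x)=P_{C_j}(\m x_j)$ in its last step, and that is precisely where it breaks on this example. The ``only if'' directions can be salvaged by an argument that avoids \eqref{projossz} altogether: each $V_i$ is an ideal, hence a subalgebra, hence lattice-like by Theorem \ref{mell}; an intersection of lattice-like sets is lattice-like; so $C$ lattice-like gives $C\cap V_i$ lattice-like in $(V,Q)$, and Corollary \ref{fooaltkov} (or a direct restriction argument, using $Q\cap V_i=Q_i$) converts this to the $Q_i$-isotonicity of $P_{C\cap V_i}$ in $V_i$. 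To obtain a correct equivalence you must either add the hypothesis that $C$ (respectively $M$) splits along the decomposition \eqref{dirossz}, or reformulate the right-hand side in terms of $P_{V_i}C$ rather than $C\cap V_i$ and prove the corresponding projection identity there.
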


\begin{corollary}\label{osszhyp}
If $H$ is a lattice-like hyperplane in $V$ represented as (\ref{dirossz}) in Theorem \ref{split},
then $V_i\Subset H$ for each simple subalgebra in (\ref{dirossz}) of rank at least $3$.
\end{corollary}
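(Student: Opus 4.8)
The plan is to push the lattice-like property of $H$ down to each simple summand via Corollary \ref{ossz}, and then to invoke the nonexistence result of Theorem \ref{nincs}. Since $H$ is a lattice-like hyperplane, we have $H\sqsubset_Q V$, so the equivalence \eqref{llossz} in Corollary \ref{ossz} immediately yields
$$
H\cap V_i\sqsubset_{Q_i}V_i,\qquad i=1,\dots,k.
$$
Thus each intersection $H\cap V_i$ is a lattice-like subspace of the simple algebra $(V_i,Q_i)$; this is the one substantive input and it is supplied verbatim by the already-established corollary.

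Next I would identify $H\cap V_i$ explicitly. Let $\m a$ be a unit normal of $H$ and write $\m a=\sum_{i=1}^k \m a_i$ with $\m a_i\in V_i$ according to the decomposition \eqref{dirossz}. Because that direct sum is orthogonal, for $\m x\in V_i$ one has $\lng\m a,\m x\rng=\lng\m a_i,\m x\rng$, whence
$$
H\cap V_i=\{\m x\in V_i:\lng\m a_i,\m x\rng=0\}.
$$
Consequently $H\cap V_i=V_i$ (equivalently $V_i\Subset H$) precisely when $\m a_i=\m 0$, and otherwise $H\cap V_i$ is a genuine hyperplane through $\m 0$ in $V_i$ with normal $\m a_i$.

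Finally I would argue by contradiction on the summands of rank at least $3$. Suppose $V_i$ is simple of rank $r\ge 3$ yet $V_i\not\Subset H$, so $\m a_i\ne\m 0$ and $H\cap V_i$ is a hyperplane in $V_i$. By the first step this hyperplane is lattice-like in $(V_i,Q_i)$, directly contradicting Theorem \ref{nincs}, which forbids lattice-like hyperplanes in a simple Euclidean Jordan algebra of rank $r\ge 3$. Hence $\m a_i=\m 0$ and $V_i\Subset H$, as claimed. The proof is essentially a two-line composition of Corollary \ref{ossz} with Theorem \ref{nincs}; the only point needing (mild) care is the orthogonal splitting of the normal $\m a$, which is justified because the direct sum in Theorem \ref{split} is simultaneously a Jordan-algebraic and an orthogonal direct sum.
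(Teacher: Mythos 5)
Your proof is correct and follows essentially the same route as the paper: the paper's own argument is a one-line contradiction observing that if $V_i\not\Subset H$ then $H\cap V_i$ is a lattice-like hyperplane in $V_i$, contradicting Theorem \ref{nincs}. You merely spell out the two implicit steps (Corollary \ref{ossz} giving the lattice-like property of $H\cap V_i$, and the orthogonal splitting of the normal showing $H\cap V_i$ is a genuine hyperplane exactly when $\m a_i\neq\m 0$), which is a faithful elaboration rather than a different approach.
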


\begin{proof}
Assume the contrary. Then, $H\cap V_i$ is an l-l hyperplane in $V_i$, which
contradicts Theorem \ref{nincs}.
\end{proof}

Gathering the results in Theorem \ref{FOOO}, Section 7, Corollary \ref{konvlor} and Corollary \ref{osszhyp}
we have 

\begin{theorem}
Suppose that $V$ is an Euclidean Jordan algebra of form
(\ref{dirossz}) with $V_i$ simple subalgebras. Let us write this sum as
\begin{equation}\label{partition}
V=W_1\oplus W_2\oplus W_3
\end{equation}
where
\begin{equation}\label{partition1}
W_1=\oplus_{i\in I_1}V_i ,\;\;W_2=\oplus_{i\in I_2} V_i,\;\; W_3=\oplus_{i\in I_3} V_i,
\end{equation}
such that $V_i$ for $i\in I_1$ are the subalgebras of rank $1$, for $i\in I_2$,
the subalgebras of rank $2$, and for $i\in I_3$ the subalgebras of rank at least $3$.
Then, $C\subset V$ is a closed convex lattice-like subset with nonempty interior
if and only if the following conditions hold:
\begin{equation}\label{alap}
C=\bigcap_{i\in \n} H_-(\m u_i,\m a_i),
\end{equation}
where each hyperplane $H(\m u_i,\m a_i)$ through $\m u_i$ and with the unit normal $\m a_i$ is tangent to $C$ and is lattice-like.
Let $\{\m c^i_1,\dots,\m c^i_r\}$ be a Jordan frame attached to $\m a_i$.
The last conditions hold if and only if %$C$ is with nonempty interior and (do we need this???) 
\begin{equation}\label{aire}
 \m a_i= a^i_1\m c^i_1+...+a^i_{r_1}\m c^i_{r_1} +a^i_{r_1+1}\m c^i_{r_1+1}+...+a^i_{r_2}\m c^i_{r_2}+a^i_{r_2+1}\m c^i_{r_2+1}+...+a^i_{r}\m c^i_{r}
\end{equation}
with $\m c^i_1,...,\m c^i_{r_1}\in W_1;\;\;\m c^i_{r_1+1},...,\m c^i_{r_2}\in W_2$, and 
$\m c^i_{r_2+1},...,\m c^i_r\in W_3$,
%\begin{equation}\label{aitobb}
%a^i_ka^i_l\leq 0,\;\textrm{whenever}\;k\not= l,
%\end{equation}
and exactly one of the following two cases hold:
\begin{enumerate}
	\item[(i)] There exists a $k\in\{1,\dots,r_1\}$ with $a^i_k\not= 0$ and exactly one of the 
		following two statements is true
		\begin{enumerate}
			\item[(i)'] The equality $a^i_j=0$ holds for $j\ne k$,
			\item[(i)''] There exists an $l\in\{1,\dots,r_1\}$ such that 
				$a^i_la^i_k<0$ and $a^i_j=0$, for $j\notin\{k,l\}$,
		\end{enumerate}
	%\item[(ii)] There exists $a^i_k\not= 0$ for some $r_1<k\leq r_2$. Then, there exits 
	%$r_1<l\leq r_2$ with %$a^i_l\not=0$, $\m c^i_k,\,\m c^i_l$ is a Jordan  frame in some 
	%$V_s\subset W_2$, $a^i_k=\frac{\sqrt 2}{2},\;\,a^i_l=-\frac{\sqrt 2}{2}$ and $a^i_j=0$ if
	%$j\notin \{k,l\}$.
	\item[(ii)] There exists $k,l\in\{r_1+1,\dots,r_2\}$ and $p\in I_2$ such that $\m c^i_k,\m c^i_l\in V_p$, $a^i_k=\sqrt{2}/2$, 
		$a^i_l=-\sqrt{2}/2$ and $a^i_j=0$, for $j\notin\{k,l\}$.  
\end{enumerate}
\end{theorem}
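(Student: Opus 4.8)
The plan is to reduce the statement to the already-established classification of lattice-like hyperplanes and then read off the admissible normals summand by summand. First I would invoke Theorem \ref{FOOO}: since $C$ is closed, convex and has nonempty interior, it is lattice-like if and only if it is an intersection \eqref{alap} of closed halfspaces whose bounding hyperplanes $H(\m u_i,\m a_i)$ are tangent to $C$ and lattice-like. Because the lattice-like property is translation invariant (Lemma 3 in \cite{NemethNemeth2012a}), $H(\m u_i,\m a_i)$ is lattice-like exactly when the parallel hyperplane $\ker\m a_i$ through the origin is, so it suffices to characterize the unit normals $\m a:=\m a_i$ for which $\ker\m a$ is lattice-like. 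Here I would first note that an attached Jordan frame may be chosen compatibly with the decomposition \eqref{dirossz}: since the sum is orthogonal and Jordan-algebraic, a spectral decomposition of $\m a$ can be carried out inside each simple summand, and the union of the resulting frames is a Jordan frame of $V$ diagonalizing $\m a$. This yields \eqref{aire} with each $\m c^i_k$ lying in exactly one of $W_1,W_2,W_3$.

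For necessity I would combine one global and several local constraints. The global one is Corollary \ref{aaa}: the coordinates of $\m a$ in its attached frame satisfy $a_pa_q\le 0$ for $p\ne q$; since among three nonzero reals two must share a sign and give a positive product, this forces $\m a$ to have at most two nonzero coordinates, carrying opposite signs if there are two. The local constraints come through Corollary \ref{ossz}: $\ker\m a$ is lattice-like only if $\ker\m a\cap V_p$ is lattice-like in $(V_p,Q_p)$ for every $p$. For $p\in I_3$ this is impossible by Theorem \ref{nincs} (equivalently Corollary \ref{osszhyp}), so $\m a$ has no $W_3$ coordinates; for $p\in I_2$, Theorem \ref{ketdim1} shows that if $\m a^{(p)}:=P_{V_p}\m a\ne\m 0$ then $\m a^{(p)}=\pm(\sqrt2/2)(\m c_k-\m c_l)$, i.e. two nonzero coordinates of values $\sqrt2/2,-\sqrt2/2$. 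The two families interlock: if any nonzero coordinate sits in $W_2$, the corresponding rank-two summand already demands both of its coordinates, exhausting the budget of two permitted by Corollary \ref{aaa}; hence every other coordinate vanishes and we land in case (ii). Otherwise all nonzero coordinates lie in $W_1$, and Corollary \ref{aaa} leaves exactly (i)$'$ (one nonzero coordinate) and (i)$''$ (two of opposite signs). In particular a lone idempotent normal on a rank-two summand is excluded, since it would make $\ker\m a\cap V_p$ a hyperplane of $V_p$ of the wrong type.

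For sufficiency I would check, conversely, that each configuration (i)$'$, (i)$''$, (ii) produces a lattice-like $\ker\m a$, splitting $\ker\m a$ along a direct sum over which it actually decomposes and applying Corollary \ref{ossz}. In case (ii) the normal is carried by a single $V_p\subset W_2$, so $\ker\m a=W_1\oplus(\ker\m a\cap W_2)\oplus W_3$ and, inside $W_2=\bigoplus_{i\in I_2}V_i$, one has $\ker\m a\cap W_2=\ker\m a^{(p)}\oplus\bigoplus_{i\ne p}V_i$; Theorem \ref{ketdim1} makes $\ker\m a^{(p)}$ lattice-like in $V_p$, and Corollary \ref{ossz} assembles the pieces. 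In case (i) the normal is carried by the orthant block $W_1$; treating $W_1$ as a subalgebra with $W_1\sqsubset_Q V$ (Theorem \ref{mell}), the hyperplane $\ker\m a\cap W_1$ is lattice-like in the coordinate-wise ordered $(W_1,Q\cap W_1)$ precisely under the sign pattern of (i)$'$/(i)$''$ by the orthant analysis of Section \ref{orthant}, whence Corollary \ref{fooaltkov} lifts it to $\ker\m a\sqsubset_Q V$. Mutual exclusivity is then immediate: (i) charges only $W_1$ and (ii) only $W_2$, while (i)$'$, (i)$''$ differ by the number of nonzero coordinates; since $\m a\ne\m 0$ always exhibits at least one nonzero coordinate, exactly one case holds.

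The main obstacle is the bookkeeping that couples the global Corollary \ref{aaa} with the local Corollary \ref{ossz}. The delicate points are: choosing the attached frame compatibly with \eqref{dirossz}; recognizing that touching a rank-two summand consumes both admissible nonzero coordinates, which is what simultaneously rules out a bare idempotent normal on $W_2$, a normal split between $W_1$ and $W_2$, and a normal spread over two distinct rank-two summands; and, in the converse, confirming that $\ker\m a$ genuinely splits along the relevant direct-sum decomposition so that Corollary \ref{ossz} applies. Once these are settled, the sign condition on $W_1$ is supplied by the orthant result of Section \ref{orthant}, which the coarser splitting of $W_1$ into one-dimensional ideals would otherwise lose, and the classification recorded in \eqref{aire} together with (i)--(ii) follows.
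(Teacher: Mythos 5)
Your proposal is correct and follows essentially the same route as the paper: reduce via Theorem \ref{FOOO} to classifying lattice-like hyperplanes, use Corollary \ref{aaa} for the global two-nonzero-coordinate sign constraint, Theorem \ref{nincs}/Corollary \ref{osszhyp} to kill the $W_3$ components, Lemma \ref{ketdim} and Theorem \ref{ketdim1} for the rank-two summands, and the orthant analysis of Section \ref{orthant} for $W_1$, then assemble sufficiency by direct sums. The only differences are cosmetic --- you cite Corollaries \ref{ossz} and \ref{fooaltkov} where the paper invokes Theorem \ref{ISOINV} and Lemma \ref{ti} for the glue steps, and you make explicit the (implicitly assumed) compatibility of the attached Jordan frame with the decomposition \eqref{dirossz}.
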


\begin{proof}
Observe first that using the representation (\ref{aire}) of $a_i$ and
the partition (\ref{partition1}) of $V$, we have the following relations:
$$r_1=|I_1|,\;r_2-r_1=2|I_2|,\;r-r_2\geq 3|I_3|.$$

The representation (\ref{alap}) follows from Theorem \ref{FOOO}.
Let us see first that the alternative (i), respectively  (ii) is
 sufficient for $H(\m u_i,\m a_i)$ to be an l-l set.

If (i) holds, then the hyperplane $H_{r_1-1}$ through $0$ with the normal
$\m a^i_{W_1}=a^i_1\m c^i_1+...+a^i_{r_1}\m c^i_{r_1}$ is by Section 7 an l-l set in $\spa \{\m c^i_1,...,\m c^i_{r_1}\}$ ordered
by the orthant engendered by $\m c_1,...,\m c_{r_1}$.
Hence $H(\m u_i,\m a_i)=(H_{r_1-1}+\m u_i)\bigoplus W_2\bigoplus W_3$ is l-l in $V$ (by Theorem \ref{ISOINV} 
and Lemma \ref{ti}).

If (ii) holds, then the hyperplane $H'$ through $0$ with the normal $\m a^i_0=a^i_k\m c^i_k+a^i_l\m c^i_l=
(\sqrt 2/2)(\m c^i_k-\m c^i_l)$ in $V_p$ is l-l (by Theorem \ref{ketdim1}, Theorem \ref{ISOINV} and Lemma 
\ref{ti}), hence
$H(\m u_i,\m a_i)=(H'+\m u_i)\bigoplus (\bigoplus_{j\not= p}V_j)$ is l-l in $V$.

To complete the proof we have to show the necessity of the alternatives (i) and (ii).
Observe first that if $H(\m u_i,\m a_i)$ is l-l, then in the representation (\ref{aire}) 
of $\m a_i$, by Corollary \ref{osszhyp}, we must have $a^i_j=0$
whenever $j>r_2$. Thus, if $a^i_j\not =0$, then $j\leq r_2.$

Suppose that $a^i_k\not= 0$ for for some $\m c^i_k\in W_2$.
Then, there exists an $a^i_l\not= 0$ and $\m c^i_k,\,\m c^i_l\in V_p$, for some $V_p$ in the representation of $W_2$.
Indeed, in the case $a^i_k\not= 0$ it follows that $a^i_k\m c^i_k\in V_p\setminus \{0\}$
for some $V_p\subset W_2$, hence $H(\m u_i,\m a_i)\cap V_p$ 
is a hyperplane in $V_p$ and our assertion follows from Lemma \ref{ketdim} (and in particular one of
$a^i_k$ and $a^i_l$ is $\sqrt 2/2$ the other is $-\sqrt 2/2$).
From Corollary \ref{aaa} it follows then that
$a^i_j=0$ for $j\notin \{k,l\}$. Thus, the alternative (ii) must hold.

Suppose now that $a^i_j\not= 0$ for some $j\leq r_1$.
Then from the reasoning of the above paragraph and Corollary \ref{aaa}, we must have $a^i_k=0$ if $k>r_1$. 
In this  case two situations are possible: (i)' $a^i_j=1$ and $a^i_l=0$ for $l\not=j$,
and  (i)'' there exists $a^i_l\not= 0,\,(l\leq r_1)$ with $a^i_ja^i_l<0$ and $a^i_k=0$ for $k\notin \{j,l\}.$
Thus, the alternative (i) must hold.

\end{proof}

%({\bf Val\'oj\'aban a bizony\'\i t\'as els\H o fel\'eben implicite azt is haszn\'aljuk,
%hogy ha $V=L\bigoplus M$ \'es $H\subset M$ hipers\'\i k az $M$-ben, akkor
%$L\bigoplus H$ hipers\'\i k $V$-ben.}) 

\begin{example}\label{simplesplit}
Let $V$ be a simple Euclidean Jordan algebra with
the Pierce decomposition given by (\ref{Oplus}) and (\ref{Oplus1}), and $d$ the common dimension of $V_{ij}$, $i\ne j$ (see Corollary IV.2.6 \cite{FarautKoranyi1994}).
Denote
$$W_{k,l}=\bigoplus_{k\leq i\leq j\leq l}V_{ij}.$$
Then, $W_{k,l}$ is a subalgebra, hence an l-l subspace.
The sum
$$W_{1,k}\bigoplus W_{k+1,r},\quad k<r$$
is a subalgebra too, and hence an l-l subspace.
Suppose that $r\geq 4$ and $2\leq k \leq r-2$. Let $H_0$
be an l-l hyperplane in $W_{k+1,r}$ which is not its subalgebra.
Then, 
$$W_{1,k}+H_0$$
is an l-l subspace in $V$ of dimension $k+(d/2)k(k-1)+ r-k-1$
which is not an algebra.

\emph{Question}: Is every l-l subspace of $V$ which is not a subalgebra of this type?
\end{example}

\section*{Acknowledgement} The authors express their gratitude to Roman Sznajder for his
helpful comments and information on Euclidean Jordan algebras.

\bibliographystyle{hieeetr}
\bibliography{semidef}

\end{document}